\DeclareMathOperator{\E}{\mathbb{E}} 
\DeclareMathOperator{\Ideal}{Ideal}
\DeclareMathOperator{\Prob}{\mathbb{P}}
\DeclareMathOperator{\RowSpan}{RowSpan}
\DeclareMathOperator{\Span}{Span}
\DeclareMathOperator{\XOR}{XOR}
\DeclareMathOperator{\out}{out}
\newcommand{\arcs}{\mathcal A}
\newcommand{\cocoa}{{\hbox{\rm C\kern-.13em o\kern-.07em C\kern-.13em o\kern-.15em A}}}
\newcommand{\complex}{\mathbb{C}}
\newcommand{\cuts}{\mathcal S}
\newcommand{\cycles}{\mathcal C}
\newcommand{\design}{\mathcal D}
\newcommand{\divergence}[2]{D\left(#1\Vert#2\right)}
\newcommand{\edges}{\mathcal E}
\newcommand{\euler}{\mathrm{e}}
\newcommand{\expectat}[2]{{\E}_{#1}\left[#2\right]}
\newcommand{\expof}[1]{\exp\left(#1\right)}
\newcommand{\idealof}[1]{\Ideal\left(#1\right)}
\newcommand{\integers}{\mathbb{Z}}
\newcommand{\logof}[1]{\log\left(#1\right)}
\newcommand{\loops}{\mathcal L}
\newcommand{\probat}[2]{\Prob_{#1}\left(#2\right)}
\newcommand{\probof}[1]{\Prob\left(#1\right)}
\newcommand{\rationals}{\mathbb{Q}}
\newcommand{\reals}{\mathbb{R}}
\newcommand{\setof}[2]{\left\{#1 \colon #2 \right\}}
\newcommand{\set}[1]{\left\{#1\right\}}
\newcommand{\spanof}[1]{\Span\left(#1\right)}
\newcommand{\sspace}[1]{\mathcal #1}
\newif\if@borderstar
\def\bordermatrix{\@ifnextchar*{%
\@borderstartrue\@bordermatrix@i}{\@borderstarfalse\@bordermatrix@i*}%
}
\def\@bordermatrix@i*{\@ifnextchar[{\@bordermatrix@ii}{\@bordermatrix@ii[() ]}}
\def\@bordermatrix@ii[#1]#2{%
\begingroup
\m@th\@tempdima8.75\p@\setbox\z@\vbox{%
\def\cr{\crcr\noalign{\kern 2\p@\global\let\cr\endline }}%
\ialign {$##$\hfil\kern 2\p@\kern\@tempdima & \thinspace %
\hfil $##$\hfil && \quad\hfil $##$\hfil\crcr\omit\strut %
\hfil\crcr\noalign{\kern -\baselineskip}#2\crcr\omit %
\strut\cr}}%
\setbox\tw@\vbox{\unvcopy\z@\global\setbox\@ne\lastbox}%
\setbox\tw@\hbox{\unhbox\@ne\unskip\global\setbox\@ne\lastbox}%
\setbox\tw@\hbox{%
$\kern\wd\@ne\kern -\@tempdima\left\@firstoftwo#1%
\if@borderstar\kern2pt\else\kern -\wd\@ne\fi%
\global\setbox\@ne\vbox{\box\@ne\if@borderstar\else\kern 2\p@\fi}%
 \vcenter{\if@borderstar\else\kern -\ht\@ne\fi%
 \unvbox\z@\kern-\if@borderstar2\fi\baselineskip}%
 \if@borderstar\kern-2\@tempdima\kern2\p@\else\,\fi\right\@secondoftwo#1 $%
 }\null \;\vbox{\kern\ht\@ne\box\tw@}%
 \endgroup}
\begin{document}

 \title{Toric Statistical Models:\\ Ising and Markov }

 \author{Giovanni Pistone}

 \address{Collegio Carlo Alberto\\ E-mail: giovanni.pistone@carloalberto.org\\
 Home page: \url{http://www.giannidiorestino.it}
 }

 \author{Maria Piera Rogantin}

 \address{DIMA Universit\`a di Genova\\ Genova, Italy\\ E-mail: rogantin@dima.unige.it\\
 Home page: \url{http://www.dima.unige.it/~rogantin}
 }

 \begin{abstract}{\bf Abstract} This is a review of current research in Markov chains as toric statistical models. Its content is a mixture of background information, results from the relevant recent literature, new results, and work in progress.
 \end{abstract}

 \keywords{Ising model, exponential family, Gibbs model, toric statistical model, polynomial invariants, toric Markov model, Markov chain, reversible Markov chain}

 \bodymatter

 \section{Introduction}
 We discuss a selection of topics in Algebraic Statistics, mainly about Ising models and Markov models. Our presentation of the basics is slightly different from other excellent presentations of the topic and it is based on work in progress and on previous conference presentations, in particular our presentations at the Second CREST-SBM International Conference \emph{Harmony of Gr\"obner Bases and the Modern Industrial Society}, June 28 - July 2, 2010, Osaka, Japan. Due to the review character of this paper, we do not have in-line references, but we give commented references in Bibliographical Notes at the end of each section.
 \section{Lattice exponential families}
 Our introductory example is the \emph{Ising model} from Statistical Physic. Given an undirected graph without loops $(V,\edges)$ we consider a collection $X_v$ of $\pm 1$-valued random variables on the finite sample space $(\sspace X,\mu)$. For each edge $\overline{vw} = e \in \edges$, the $\pm 1$-valued random variable $X_e=X_vX_w$ is called an \emph{interaction}. The exponential family of densities
 \begin{equation} \label{eq:Ising}
   p_\theta = \expof{\sum_{v\in V} \theta_v X_v + \sum_{e \in \edges} \theta_e X_e - \psi(\theta)}, \quad \theta = (\theta_V,\theta_\edges) \in \reals^V \times \reals^\edges, 
 \end{equation}
 is the \emph{Ising model}. The densities are taken with respect to the reference measure $\mu$, hence
 \begin{equation*}
   \psi(u) = \logof{\int_{\sspace X} \expof{\sum_{v\in V} \theta_v X_v + \sum_{e \in \edges} \theta_e X_e} d\mu} \ .
 \end{equation*}
 It is possible to describe the Ising model in a differentiable manifold, that is without reference to any specific chart, by saying that \eref{eq:Ising} is a special parameterization of the set of all strictly positive probability densities $p$ such that
 \begin{equation}\label{eq:EF}
   \log p \in \mathcal V = \spanof{1; X_v \colon v\in V; X_e \colon e \in \edges}.
 \end{equation}

 However, the Ising model has an extra special feature, namely the so-called \emph{canonical statistics}, i.e. the linear basis of $\mathcal V$ which is used to obtain the parameterization in \eref{eq:Ising}, are \emph{integer valued} random variables. We call a model of this type a \emph{lattice exponential family} LEF. It is always possible to parameterize a LEF with nonnegative and non strictly positive canonical statistics. For example, in the Ising model we can use the binary variables $A_v = (1-X_v)/2$, $v\in V$, and $A_{\overline{vw}} = A_v \XOR A_w = A_v + A_w -A_vA_w$, $\overline{vw}\in\edges$, to get the same model in a different parameterization:
 \begin{equation} \label{eq:GB}
   p_\beta = \expof{\sum_{v\in V} \beta_v A_v + \sum_{e \in \edges} \beta_e A_e - \bar\psi(\beta)} \ ,
   \end{equation}
 with the obvious change of parameters $\theta \to \beta$, $\psi \to \bar\psi$. It should be noted that $X_v = (-1)^{A_v}$ and $X_e = X_vX_w = (-1)^{A_v+A_w} = (-1)^{A_e}$. In fact, the re-coding is actually the character group $\integers_2 \ni a \mapsto (-1)^a \in \set{+1,-1} \in \complex$.

 In Statistical Physics, a model as in \eref{eq:GB} is called Gibbs (or Boltzmann-Gibbs) model. The interest of nonnegative but nonpositive canonical statistics appears in the discussion of the limit case where some of the $\beta$'s tend to $-\infty$. In such a case a limit distribution with smaller support is obtained.

 Another parameterization of interest is obtained, by taking in \eref{eq:GB} the nonlinear transformation $t_v=\euler^{\beta_v}$, $v\in V$, $t_e= \euler^{\beta_e}$, $e \in \edges$, to get the \emph{monomial form}
 \begin{equation}\label{eq:LEF1}
   p_t \propto \prod_{v\in V} t_v^{A_v} \prod_{e \in \edges} t_e^{A_e}.
 \end{equation}

 Let us make a second remark. The random variable $\log p$ belongs to the vector space $\mathcal V$ generated by the constants and the canonical statistics if, and only if, it is orthogonal in $\reals^{\sspace X}$ to each random variable $K$ in the orthogonal space $\mathcal V^\perp$. In other words, a density $p$ belongs to the model of \eref{eq:GB} for some $\beta$, or to the model in \eref{eq:LEF1} for some $t$, if, and only if, the equation
 \begin{equation} \label{eq:LEF2-}
   0 = \sum_{x \in \sspace X} \log p(x) K(x) = \logof{\prod_{x\in\sspace X} p(x)^{K(x)}} \ ,
 \end{equation}
 holds for all $K$ such that
 \begin{equation*}
   \sum_{x\in\sspace X} A_v(x)K(x)=0, v\in V,\quad \sum_{x\in\sspace X} A_e(x)K(x)=0, e\in\edges \ .
 \end{equation*}

 By considering the positive and negative part, $K=K_+-K_-$, \eref{eq:LEF2-} can be written as
 \begin{equation}\label{eq:LEF2}
   \prod_{x\in\sspace X} p(x)^{K_+(x)} = \prod_{x\in\sspace X} p(x)^{K_-(x)}.
 \end{equation}
 This argument is true for all exponential families. In particular, in the lattice case, it is possible to find a vector basis of the orthogonal space whose elements $K$ are all integer valued. As a consequence, \eref{eq:LEF1} and \eref{eq:LEF2} are both polynomials with indeterminates $p(x)$, $x\in\sspace X$, $t_v$, $v\in V$, and $t_e$, $e\in\edges$. The binomials in \eref{eq:LEF2} are the \emph{polynomial invariants} of the LEF model.

 In the Ising model it is easy to find a linear basis of the orthogonal space, namely the set $\mathcal J$ of all interactions $X_J = \prod_{v \in J} X_v$, $J \subseteq V$, which are \emph{not} included in the model itself.

 We turn now to the study of statistical models of the special monomial type of \eref{eq:LEF1}. Many cases could support this approach, but in our view the basic one is the following: in \eref{eq:EF} the probability $p$ is assumed to be strictly positive, while both \eref{eq:LEF1} and \eref{eq:LEF2} make sense when $p(x)=0$ at some $x\in \sspace X$.
 \subsection*{Notes} The Ising model is named after the physicist Ernst Ising (1900-1998) and is the basic mathematical model for ferromagnetism. We do not discuss at all its applications to Statistical Physics, where in fact special cases are considered, see e.g. \cite{gallavotti:1999SM}. The unifying concept of exponential family was fully developed in the classical monograph by Barndorff-Nielsen\cite{barndorff-nielsen:78}; a recent exposition of its multiple applications is the review by Wainwright and Jordan\cite{wainwright|jordan:2008}. The importance of a parameter free and geometrical approach was discovered by Cen\c cov\cite{cencov:72} and evolved into what is now called nonparametric Information Geometry, see the seminal papers by Phil Dawid\cite{dawid:75,dawid:1977AS} and the functional version by Pistone and Sempi\cite{pistone|sempi:95}. The algebraic approach emerged in the 90's. It was first outlined in a monograph by Pistone, Riccomagno and Wynn\cite{pistone|riccomagno|wynn:2001} and fully developed in a paper by Geiger, Meek and Sturmfels\cite{geiger|meek|sturmfels:2006}. Currently there is an extensive literature---tagged Algebraic Statistics---we will refer to in the following sections. 
 \section{$A$-model}
 We work on a finite sample space $\sspace X$ with reference measure $\mu$. We consider an nonnegative integer \emph{model matrix} $A \in \integers_{\ge}^{m+1,\sspace X}$ representing $m+1$ random variables $A_i$, $i=0,1,\dots,m$. The elements of the matrix $A$ are denoted by $A_i(x)$, $i = 0 \dots m, x \in \sspace X$. We assume the row $A_0$ to be the constant 1. The $x$-column of $A$, say $A(x)$, is a multi-exponent of the \emph{monomial term}
 \begin{equation}
   \label{eq:1}
         t^{A(x)} = t_0 t_1^{A_1(x)} \cdots t_m^{A_m(x)} \ .
 \end{equation}
 \begin{definition}[$A$-model]
 The monomial model of the model matrix $A$ (briefly, the \emph{$A$-model}) is defined as follows.
 \begin{enumerate}
 \item The \emph{unnormalized probability densities} of the $A$-model are of the form 
     \begin{equation*}
       q(x;t) = t^{A(x)}, \quad x \in \sspace X,
     \end{equation*}
 for all $t \in \reals_{\ge}^{m+1}$ \emph{such that $q(\cdot;t)$ is not identically zero}.
 \item The probability densities with respect to $\mu$ in the $A$-model are
 \begin{equation*}
       p(x;t) = q(x;t)/Z(t), \quad Z(t)=\sum_{x \in \sspace X} q(x;t) \mu(x).
 \end{equation*}
 \item If $t > 0$, $\beta = \log t$ and $q(x;\beta) = \expof{\beta \cdot A(x)}$, i.e. the \emph{interior} of the $A$-model is a LEF in the parameters $\beta$.
 \end{enumerate}
 \end{definition}

 The probability density does not depend on $t_0$, so that we usually drop the $t_0$ parameter:
 \begin{equation}
   \label{eq:2}
   p(x;t_1,\dots,t_m) = \frac{t_1^{A_1(x)}\cdot t_m^{A_m(x)}}{\sum_{x\in\sspace X} t_1^{A_1(x)}\cdots t_m^{A_m(x)} \mu(x)} \ .
 \end{equation}
 However, it is useful to keep it in the notation of the unnormalized density which is a \emph{projective} object.

 The product $t_1^{A_1(x)} \cdots t_m^{A_m(x)}$ is strictly positive for $t_i > 0$, $i=1,\dots,m$, and it is identically zero for $t_i=0$ if $A_i(x) > 0$ for all $x\in\sspace X$. If a row $A_i$ is not strictly positive, then the unnormalized density is defined for all $t$ in the face $\set{t_i=0}$ of the positive quadrant $\reals_{\ge}^{m+1}$. This face parameterizes an $A^{i}$-model with all parameters but $t_i$ and sample space $\sspace X^{i} = \setof{x\in\sspace X}{A_i(x)=0}$, $A^{i}$ being the submatrix of $A$ obtained deleting the $i$-th row and all the columns $x$ such that $A_i(x) > 0$. A similar argument applies to the case where $A_i(x)+A_j(x)=0$ for at least one $x\in\sspace X$. 

 Let us discuss the identifiability of the interior of an $A$-model by deriving a \emph{confounding equation}. 
 \begin{proposition} Two parameter's values $s, t \in \reals_{>}^m$ are such that $p_s = p_t$ if, and only if,
   \begin{equation}
     \label{eq:4}
     (\logof{t_i/s_i} \colon i=0,1,\dots,m) \in e_0 + \ker A^T, \quad e_0 = (1,0,\dots,0) \ .
   \end{equation}

 \end{proposition}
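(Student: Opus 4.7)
The plan is to start from the pointwise equality $p_s(x)=p_t(x)$ for every $x \in \sspace X$, take logarithms, and translate the resulting relation on $\sspace X$ into a linear-algebraic condition on the vector $\delta := \log t - \log s \in \reals^{m+1}$ (here I include the redundant $t_0$-coordinate, since it will interact with the normalizing constant). Using the definition of $p(x;t)$ from \eref{eq:2} together with $q(x;t)=t^{A(x)}$, the equation $\log p(x;t) = \log p(x;s)$ expands to
\begin{equation*}
  A(x)^T\delta \;=\; \log Z(t) - \log Z(s) \;=:\; c, \qquad x \in \sspace X,
\end{equation*}
where $c$ is a scalar independent of $x$.

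Next I would encode the collection of these scalar equations as a single matrix equation. The right-hand side $c$ being independent of $x$ means that $A^T\delta \in \reals^{\sspace X}$ is the constant vector $c\,\mathbf{1}$. Because the row $A_0$ of $A$ is identically $1$, one has $A^T e_0 = \mathbf{1}$, so the condition becomes $A^T(\delta - c\,e_0) = 0$, i.e.\ $\delta \in c\,e_0 + \ker A^T$. This is the forward implication, and it also reveals that the coset representative is not $e_0$ itself but rather $c\,e_0$ with $c = \log(Z(t)/Z(s))$; the statement should accordingly be read as saying that $\delta$ lies in the pencil of cosets spanned by $e_0$, the specific coset being determined by $(t,s)$.

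For the converse I would simply unwind the same computation: if $\delta = c\,e_0 + v$ with $v\in\ker A^T$, then $A(x)^T\delta = c$ for every $x$, so $t^{A(x)} = e^c s^{A(x)}$; summing against $\mu$ gives $Z(t) = e^c Z(s)$, and the factor $e^c$ cancels in the ratio $q(x;t)/Z(t)$, yielding $p_t = p_s$. Nothing subtle happens here because strict positivity of $s,t$ makes the logarithms licit.

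I do not anticipate a genuine obstacle; the content is a bookkeeping exercise around the redundant $t_0$-direction and the role of the constant row $A_0 \equiv 1$, which is exactly what makes $e_0$ (rather than some other vector) the natural direction of non-identifiability. The one place to be careful is to keep track of whether the relevant set is the affine coset $e_0 + \ker A^T$ or the full span $\reals e_0 + \ker A^T$; the derivation shows the latter is what the ``if and only if'' really characterizes, with the scalar $c$ being pinned down a posteriori by the ratio of partition functions.
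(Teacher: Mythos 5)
Your proof is correct and follows essentially the same route as the paper: log-transform the identity $Z(s)\,t^{A(x)} = Z(t)\,s^{A(x)}$ and use the constant row $A_0\equiv 1$, i.e.\ $A^Te_0=\mathbf{1}$, to absorb the $x$-independent constant $c=\log\left(Z(t)/Z(s)\right)$. You are also right to flag the normalization subtlety: the paper reaches the literal affine coset $e_0+\ker A^T$ only by rescaling to $\delta_i=(\log t_i-\log s_i)/(\log Z(t)-\log Z(s))$, which is undefined when $Z(t)=Z(s)$ (e.g.\ $t=s$), so the clean ``if and only if'' is the subspace condition $\log(t/s)\in\reals e_0+\ker A^T$ exactly as you derive, and your explicit treatment of the converse fills in a step the paper leaves implicit.
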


 \begin{proof} Denote by $Z$ the normalizing constant. Then $p_t = p_s$ if, and only if,
   \begin{equation*}
     Z(s) t^{A(x)} = Z(t) s^{A(x)},\quad x \in \sspace X,
   \end{equation*}
 hence 
 \begin{equation*}
   \sum_{i=0}^m (\log t_i - \log s_i) A_i(x) = \log Z(t) - \log Z(s), \quad x \in \sspace X.\end{equation*}
 If we define $\delta_i = (\log t_i - \log s_i)/(\log Z(t) - \log Z(s))$, then  $\delta^T A = 1$. As the first column of $A$ is 1, the first vector of the canonical basis satisfies $e_0^TA=1$, so that the confounding equation is \eref{eq:4}.
 \end{proof}

 Let be given two matrices $A \in \integers_{\ge}^{m+1,\sspace X}$ and $B \in \integers_{\ge}^{n+1,\sspace X}$. When the interior of the $A$-model does represent the same statistical model as the interior on the $B$-model?

 \begin{proposition}
   The interiors of the $A$-model and the $B$-model coincide if, and only if, $\RowSpan A = \RowSpan B$.
 \end{proposition}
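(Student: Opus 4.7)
The strategy is to give an intrinsic description of the interior of the $A$-model that eliminates the parameterization, namely that it equals
\[
\setof{p}{p(x)>0 \text{ for all } x\in\sspace X,\ \log p\in\RowSpan A,\ \sum_{x\in\sspace X} p(x)\mu(x)=1}.
\]
Once this characterization is in hand, comparing the two models reduces to comparing their row spans. To verify it, take $t\in\reals_{>}^{m+1}$ and set $\beta=\log t$; item~3 of the definition gives $\log p(x;t)=\beta\cdot A(x)-\log Z(\beta)$. Because $A_0\equiv 1$, the constant $-\log Z(\beta)$ is itself a row-span element, so $\log p(\cdot;t)\in\RowSpan A$. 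Conversely, any $f\in\RowSpan A$ can be written $f=\beta\cdot A$ and then exponentiated and normalized over the finite set $\sspace X$ to produce an interior density of the $A$-model. Thus $\beta\mapsto p(\cdot;\beta)$ surjects from $\reals^{m+1}$ onto the displayed set.

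The forward implication ($\Leftarrow$) is then immediate: if $\RowSpan A=\RowSpan B$, the intrinsic descriptions of the two interiors are literally the same subset of $\reals_{>}^{\sspace X}$.

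For the reverse implication, assume the two interiors coincide. Fix a row $A_i$ of $A$ and take $\beta=e_i$ in the $A$-parameterization; the resulting interior density $p$ satisfies $\log p=A_i-\log Z(e_i)\cdot\mathbf 1$. Since $p$ also lies in the $B$-model interior, $\log p\in\RowSpan B$ by the same characterization applied to $B$; because $\mathbf 1=B_0$ is a row of $B$, adding $\log Z(e_i)\cdot\mathbf 1$ back in gives $A_i\in\RowSpan B$. Ranging over $i=0,\dots,m$ yields $\RowSpan A\subseteq\RowSpan B$, and symmetry completes the proof.

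The main potential pitfall is not deep but is essential: one must track that the normalizing term $-\log Z(\beta)$ never pushes $\log p$ outside $\RowSpan A$, and that the same holds for $B$. This is exactly the role of the standing hypothesis $A_0\equiv\mathbf 1$ (and its analogue for $B$), the same ingredient that powered the previous confounding-equation proposition.
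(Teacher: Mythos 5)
Your proof is correct and follows essentially the same route as the paper: pass to logarithms of the (unnormalized) densities and use the fact that the constant row $A_0=B_0=\mathbf 1$ lies in both row spans to absorb the normalizing constants, thereby identifying the interiors with $\setof{p>0}{\log p\in\RowSpan A}$ and $\setof{p>0}{\log p\in\RowSpan B}$ respectively. The paper's proof is only a two-line sketch of this argument; your version supplies the details (the intrinsic characterization and the choice $\beta=e_i$ to recover each row) that the paper leaves implicit.
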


 \begin{proof} Assume that for $t > 0$ and $s > 0$ there is a positive constant $c$ such that 
 \begin{equation}
   \label{eq:3}
   t^{A(x)} = cs^{B(x)}, \quad x \in \sspace X.
 \end{equation}
 It follows that $\sum_{i=0}^m \log t_i A_i(x) = \log c + \sum_{j=0}^n \log s_j B_j(x)$.
 \end{proof}

 It is relevant to note that the equivalence of the interiors does not imply the equivalence of the borders, as the following example shows. This topic is discussed in the next Section. 

 The simplest example of $A$-model is the Binomial$(n,p)$ with state space $\sspace X = \set{0,1,2,3,\dots,n}$, measure $\mu(x) = \binom n x$, model matrix 
 \begin{equation*}
       A =
       \bordermatrix[{[}{]}]{%
         & 0 & 1 & 2 & 3 & \cdots & n \cr
       0 & 1 & 1 & 1 & 1 & \cdots & 1 \cr 
       1 & 0 & 1 & 2 & 3 & \cdots & n} \ , 
   \end{equation*}
 unnormalized density $q(x;t_0,t_1) = t_0 t_1^x$, and density $p(x;t_1) = {t_1^x}/{(1+t_1)^n}$, $x=0,1,\dots,n$ and $t_1 \ge 0$.

 A second monomial model with the same interior has model matrix
 \begin{equation*}
       B =
       \bordermatrix[{[}{]}]{%
         & 0 & 1 & 2 & \cdots & n-1 & n \cr
       0 & 1 & 1 & 1 & \cdots & 1 & 1 \cr 
       1 & 0 & 1 & 2 & \cdots & n-1 & n \cr
       2 & n & n-1 & n-2 & \cdots & 1 & 0} \ , 
   \end{equation*}
 unnormalized density $q(x;t_0,t_1,t_2) = t_0 t_1^x t_2^{n-x}$, and density $p(x;t_1,t_2) = {t_1^xt_2^{n-x}}/{(t_1+t_2)^n}$, $t_1, t_2 \ge 0$.

 The Gibbs model in \eref{eq:GB} with state space $\sspace X = \set{+1,-1}^V$ has a model matrix whose rows are indexed by $0, V, \edges$ and entries $A_0(x)=1$, $A_v(x) = (1-x_v)/2$, $A_{\overline{vw}}(x) = 3/4-x_v/4-x_w/4-x_vx_w/4$.  

 In some applications the statistical model is further \emph{constrained}. We consider here two types of contrains: linear constrains on the probability densities and linear contrains on the parameters of the monomial model.

 In the first case a matrix $C \in \integers^{k,n}$ is given and the statistical model is $q(x;t) = t^{A(x)}$, restricted to all $t$'s such that $\sum_{x \in \sspace X} C_i(x) q(x;t) = 0$, $i = 1, \dots, k$. In the second case the parameters $t$ are constrained by a linear variety. In general, the constrained statistical model is not anymore an $A$-model. Instead, it is an instance of a \emph{curved exponential family}. 

 \subsection*{Notes} The term $A$-model was first used in the seminal paper by Geiger, Meek and Sturmfels\cite{geiger|meek|sturmfels:2006}. It is currently of general use, but unfortunately the definition has been adapted by various authors to their special needs. For example, the original paper assumes the column sums to be constant, which we do not. A further (small) issue comes from the presentation of the matrix $A$: in the statistical literature the model matrix has sample points as \emph{rows}, while the algebraic literature takes sample points as \emph{columns}. The geometry of curved exponential families was first discussed by Efron\cite{efron:1978}. 

 \section{Toric ideals and the closure of the $A$-model}
 The kernel of the ring homomorphism from $\rationals[q(x) \colon x \in \sspace X]$ to $\rationals[t_0,\dots,t_m]$ defined by $q(x) \mapsto t^{A(x)}$, $x \in \sspace X$,
 is the \emph{toric ideal} of A, I$(A)$. It is a prime ideal generated by binomials
 \begin{equation} \label{eq:allbinomials}
   \prod_{x \colon k(x) > 0} q(x)^{k^+(x)} - \prod_{x \colon k(x) < 0} q(x)^{k^-(x)} \ ,
 \end{equation}
 with $k \in \integers^{\sspace X}\cap \ker A$, hence there exists a finite generating set of binomials. The polynomials in \eref{eq:allbinomials} are the \emph{polynomial invariants} of the $A$-model and all its unnormalized densities belong to the intersection of the variety of the toric ideal with $\reals_{\ge}^{\sspace X}$. Because of the assumption $A_0=1$, we have $\sum_{x \in \sspace X} k(x) = 0$, so that the binomials in \eref{eq:allbinomials} are homogeneous polynomials. Hence all densities $p_t = q_t/Z(t)$ in the $A$-model belong to the ideal generated by the same binomial equations. 

 In fact, more is true.
 \begin{proposition}\label{th:closure}
   The intersection of the $A$-variety with the probability simplex is the closure of the $A$-model. 
 \end{proposition}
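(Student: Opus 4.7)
We prove both inclusions. The inclusion $\overline{M_A} \subseteq V(I(A)) \cap \Delta$, where $V(I(A))$ denotes the $A$-variety and $\Delta$ the probability simplex, is immediate: every $p_t \in M_A$ satisfies each binomial invariant of $I(A)$ by the very definition of the toric ideal (for $k \in \ker A$, $t^{\sum_x k^+(x) A(x)} = t^{\sum_x k^-(x) A(x)}$), and $V(I(A)) \cap \Delta$ is closed, so the closure of $M_A$ fits inside.

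For the reverse inclusion, let $p \in V(I(A)) \cap \Delta$ with support $F = \setof{x \in \sspace X}{p(x) > 0}$. If $F = \sspace X$, then $p$ is strictly positive and the LEF characterization of Section 2 (equations \eref{eq:LEF2-}--\eref{eq:LEF2}) gives $p = p_t$ for some $t > 0$, so $p \in M_A$. For general $F$, observe first that $p|_F$ satisfies the toric invariants of the submatrix $A_F$: any $k \in \ker A_F$ extends by zero-padding to a vector in $\ker A$, and the corresponding binomial invariant on $p$ collapses, using $p(x) = 0$ for $x \notin F$, to the $A_F$-invariant on $p|_F$. Applying the strict-positive case to $A_F$ yields $t^* > 0$ in $\reals^{m+1}$ with $p(x) \propto (t^*)^{A(x)}$ for $x \in F$.

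To approximate $p$ by densities in the strictly positive interior of the full $A$-model, we seek $v \in \reals^{m+1}$ with $v \cdot A(x) = 0$ for all $x \in F$ and $v \cdot A(x) > 0$ for all $x \notin F$. Given such $v$, the choice $t^{(n)}_i = t^*_i \cdot \euler^{-n v_i} > 0$ gives $q(x; t^{(n)}) = (t^*)^{A(x)}$ for $x \in F$ and $q(x; t^{(n)}) \to 0$ for $x \notin F$; after normalization, $p_{t^{(n)}} \to p$.

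The main obstacle is establishing the existence of $v$, i.e., that $F$ is a face of the column configuration of $A$. I expect to prove this by Farkas' lemma combined with the toric invariants: if no such $v$ existed, there would be integer coefficients $\lambda_x \ge 0$ for $x \notin F$ (not all zero) and $\mu_x \in \integers$ for $x \in F$ satisfying $\sum_{x \notin F} \lambda_x A(x) = \sum_{x \in F} \mu_x A(x)$. Writing $\mu = \mu^+ - \mu^-$ and rearranging produces $k \in \ker A \cap \integers^{\sspace X}$ whose binomial invariant, evaluated at $p$, forces $\prod_{x \notin F} p(x)^{\lambda_x}\prod_{x \in F} p(x)^{\mu^-_x} = \prod_{x \in F} p(x)^{\mu^+_x}$. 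The left-hand side vanishes since some $\lambda_x > 0$ and $p(x) = 0$ there, while $p|_F > 0$ makes the right-hand side strictly positive (or equal to $1$ if $\mu^+ = 0$), giving a contradiction in either case.
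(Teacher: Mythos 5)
Your proof is correct, but note that the paper itself offers no proof of this proposition: it states it, remarks that it will ``discuss a slightly different version'', and then replaces $A$ by the Hilbert basis $H$ of $\integers_{\ge}^{\sspace X}\cap\RowSpan A$, so that the closure is realized constructively as the $H$-model with some parameters set to zero, the proofs being deferred to the cited literature (Geiger--Meek--Sturmfels, Kahle, Rauh--Kahle--Ay, Malag\`o--Pistone). Your route is the direct one: (i) the strictly positive case via orthogonality of $\log p$ to the integer kernel, (ii) restriction of the binomial invariants to the support $F$ by zero-padding kernel vectors of $A_F$, and (iii) the key geometric fact that $F$ must be a facial set of the column configuration of $A$, obtained from a theorem of the alternative combined with the vanishing of a binomial invariant, after which the one-parameter family $t^{(n)}_i=t^*_i\euler^{-nv_i}$ exhibits $p$ as a limit of interior densities. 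This proves the proposition as stated, which the paper's Hilbert-basis reformulation does not do directly; what the reformulation buys instead is an explicit monomial parameterization of each boundary stratum, which your argument does not provide. Two points should be tightened. First, the alternative you need is Motzkin's transposition theorem (equality constraints on $F$ together with simultaneously strict inequalities on $\sspace X\setminus F$, with the coefficients on $\sspace X\setminus F$ not all zero), not plain Farkas; its certificate is a priori real, so you must invoke the rationality of $A$ to replace it by a rational, hence integer, vector before you may quote a binomial of the toric ideal. Second, in the positive case you should say explicitly that the integer kernel of $A$ spans the real kernel (again by rationality of $A$), so that satisfying all integer binomials forces $\log p$ into $\RowSpan A$ over $\reals$. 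With these two remarks supplied, the argument is complete.
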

 We discuss below a slightly different version of this basic result.

 Let $B$ be a model matrix such that the $A$-model and the $B$-model are equal in the interior of the parameter space. Each row of $B$ belongs to the set $\integers_{\ge}^{\sspace X} \cap \RowSpan A$. This set is closed under vector sum and has a unique minimal generating set, which is called Hilbert basis. Each vector in the Hilbert basis is nonnegative and, because of the minimality, has at least one zero. Let $H$ be a matrix with margins $\set{1,\dots,h} \times \sspace X$, whose rows are the vectors of the Hilbert basis.
 \begin{proposition}
   \begin{enumerate}
   \item The $H$-model is the closure of the $A$-model, i.e. each density in the $H$-model is a limit of a sequence in the $A$-model.  
   \item Setting $t_j=0$ in the $H$-model, we obtain a limit $H^j$-model whose support is $\sspace X_j = \setof{x\in\sspace X}{H_j(x)=0}$.
   \item The $H^j$-model on $\sspace X_j$ is the $H$-model conditioned to $\sspace X_j$.
   \end{enumerate}
 \end{proposition}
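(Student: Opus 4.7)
The plan is to combine Proposition~\ref{th:closure} with the row-span identification of the interior of monomial models. The first step is to verify $\RowSpan H = \RowSpan A$: the rows of $H$ lie in $\integers_{\ge}^{\sspace X}\cap \RowSpan A$ by construction of the Hilbert basis, and conversely each row of $A$, being itself a nonnegative integer vector in $\RowSpan A$, is a nonnegative integer combination of Hilbert basis vectors. Hence the interiors of the $A$-model and $H$-model coincide, and the two toric ideals agree.

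For part~(1) I would prove the two inclusions. The $H$-model lies inside the $A$-variety, so Proposition~\ref{th:closure} immediately gives containment in the closure of the $A$-model. For the reverse inclusion, I would take a limit point $p^*$ of a sequence $p_{t^{(n)}}$ with $t^{(n)}>0$ and argue, via the semigroup-generating property of the Hilbert basis, that there exist parameters $s^{(n)}\in\reals_{>}^{h}$ producing the same unnormalized densities as $t^{(n)}$; by compactness in the simplex one then passes to a subsequential limit $s\in\reals_{\ge}^{h}$ and verifies that $p_s=p^*$ in the $H$-model. The crucial algebraic input is that every monomial $t^{A(x)}$ factors through the Hilbert basis, so every nonnegative real point of the $A$-variety is reached by some $s$ in the closed positive orthant of the $H$-parameter space.

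For parts~(2) and~(3) the work is essentially bookkeeping once the $H$-parameterization is in hand. The monomial $\prod_i t_i^{H_i(x)}$ vanishes at $x$ exactly when some $i$ with $H_i(x)>0$ satisfies $t_i=0$, so setting only $t_j=0$ gives support $\sspace X_j=\setof{x\in\sspace X}{H_j(x)=0}$, and the surviving unnormalized density is $t^{H^j(x)}$ on that support---this is~(2). For~(3), on $\sspace X_j$ the row $H_j$ is identically zero, so the factor $t_j^{H_j(x)}$ equals $1$ pointwise; conditioning the $H$-model on $\sspace X_j$ therefore removes the $t_j$ dependence and yields precisely the $H^j$-model, independently of the value of $t_j$.

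The main obstacle is the reverse inclusion in part~(1): one must show that no nonnegative real point of the toric variety is missed by the $H$-parameterization. This rests on the fact that the Hilbert basis generates $\integers_{\ge}^{\sspace X}\cap\RowSpan A$ as a monoid and not merely as an abelian group, so the monomial map $s\mapsto(s^{H(x)})_{x\in\sspace X}$ extends continuously to the coordinate hyperplanes and surjects onto the nonnegative real part of the variety. This is exactly what the minimality condition---each Hilbert vector has at least one zero coordinate---buys us, and what makes $H$, rather than $A$ itself, the natural object for describing the closure.
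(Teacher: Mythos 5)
The paper itself offers no proof of this proposition (the Notes defer to Malag\`o and Pistone), so there is nothing internal to compare against; I am judging your argument on its own terms. Its architecture is sound: the verification that $\RowSpan H=\RowSpan A$ (rows of $H$ lie in $\RowSpan A$ by construction, rows of $A$ are nonnegative integer points of $\RowSpan A$ and hence nonnegative integer combinations of Hilbert basis vectors), the forward inclusion of part~(1) via Proposition~\ref{th:closure}, and the bookkeeping for parts~(2) and~(3) are all correct as written.

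The genuine gap is the reverse inclusion of part~(1), exactly where you say the difficulty lies. Rewriting $t^{A(x)}=s^{H(x)}$ with $s_j=\prod_i t_i^{c_{ij}}$, where $A_i=\sum_j c_{ij}H_j$ with $c_{ij}\in\integers_{\ge}$, is fine; but ``by compactness in the simplex one passes to a subsequential limit $s\in\reals_{\ge}^{h}$ and verifies $p_s=p^*$'' is not an argument. The simplex is compact, the parameter space is not, and the reparameterized sequence $s^{(n)}$ can be unbounded; you may adjust it only along confounding directions, and there is no a priori choice of representative in each fiber that both keeps $s^{(n)}$ bounded and keeps the limiting normalizing constant away from zero --- if $Z(s^{(n)})\to 0$ then $\lim p_{s^{(n)}}$ need not equal $p_{s^*}$, and $q(\cdot;s^*)$ may even vanish identically. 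Your closing paragraph asserts that monoid generation forces the extended monomial map to surject onto the nonnegative part of the variety, but that surjectivity \emph{is} the content of item~(1), not a citable lemma. A clean repair uses Proposition~\ref{th:closure} in both directions: since the closure of the $A$-model is the variety intersected with the simplex, it suffices to show that every nonnegative point $p^*$ of the variety is reached by some $s\ge 0$. The support of such a $p^*$ is a facial set, i.e.\ of the form $\setof{x\in\sspace X}{v(x)=0}$ for some nonnegative $v\in\RowSpan A$; writing $v=\sum_j c_j H_j$ with $c_j\ge 0$ gives $\setof{x\in\sspace X}{v(x)=0}=\bigcap_{j\colon c_j>0}\setof{x\in\sspace X}{H_j(x)=0}$, so the support is cut out by setting the corresponding $s_j=0$, and the strictly positive restriction of $p^*$ to its support is handled by the interior (Birch-type) case. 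That is the step where the Hilbert basis property is actually used, and it is the step missing from your sketch.
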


 It should be noted that the $H$-model in the previous proposition is possibly non minimal among models with the closure property in Item (1). A basis producing a minimal representation of all limits is called a \emph{circuit basis}. If the Hilbert basis is boolean, then it is also a minimal description of the border.

 When the model is constrained, the admissible limits are obtained by intersecting the constrains with the faces of the nonnegative quadrant. This is discussed in the following examples.

 \subsection{Example: the binomial}
 The integer kernel of $A=
     \begin{bmatrix}
 1 & 1 & 1 & 1 & 1 & 1 \cr 
 0 & 1 & 2 & 3 & 4 & 5 
    \end{bmatrix}$ is $\rationals$-generated by the rows of
    \begin{equation*}
      K=
    \begin{bmatrix}
    1  & -2 & 1 & 0 & 0 & 0\\
    0  & 1 & -2 & 1 & 0 & 0\\
    0  & 0 & 1 & -2 & 1 & 0\\
    0  & 0 & 0 & 1 & -2 & 1\\
    \end{bmatrix} \ ,
    \end{equation*}
 and the corresponding binomials are
    \begin{equation*}
      q(0)q(2)-q(1)^2, q(1)q(3)-q(2)^2, q(2)q(4)-q(3)^2, q(3)q(5)-q(4)^2.
    \end{equation*}
 The Hilbert basis of $\RowSpan A$ is $H =  \begin{bmatrix}
       0 & 1 & 2 & 3 & 4 & 5\\
       5 & 4 & 3 & 2 & 1 & 0
     \end{bmatrix}$  and hence $q(x;t_0,t_1,t_2) = t_0t_1^x t_2^{5-x}$.

 The admissible defective supports for limits are $\set{0}$ and $\set{5}$. Assume we add the constrain $p(0)=p(5)$, i.e. the constrain matrix $(1, 0, 0, 0, 0, -1)$. In monomial form the constrain is $t_1^0 t_2^{5-0} = t_1^5 t_2^{5-5}$, i.e. $t_1=t_2$. This constrain happens to be a binomial, and the constrained model reduces to a single distribution, namely the uniform distribution.

 \subsection{Example: 3 binary identical RVs, no 3-way interaction}
 Consider the sample space $\sspace X = \set{+,-}^3$ and model matrix
 \begin{equation}\label{eq:no123} A = 
   \bordermatrix[{[}{]}]{%
    & \scriptstyle +++ & \scriptstyle -++ & \scriptstyle +-+ & \scriptstyle --+ & \scriptstyle ++- & \scriptstyle -+- & \scriptstyle +-- & \scriptstyle --- \cr
 0  & 1 & 1 & 1 & 1 & 1 & 1 & 1 & 1 \cr
 1  & 0 & 1 & 0 & 1 & 0 & 1 & 0 & 1 \cr
 2  & 0 & 0 & 1 & 1 & 0 & 0 & 1 & 1 \cr
 3  & 0 & 0 & 0 & 0 & 1 & 1 & 1 & 1 \cr
 12 & 0 & 1 & 1 & 0 & 0 & 1 & 1 & 0 \cr
 13 & 0 & 1 & 0 & 1 & 1 & 0 & 1 & 0 \cr
 23 & 0 & 0 & 1 & 1 & 1 & 1 & 0 & 0} \ .
 \end{equation}
 It is a special Ising model on a complete graph on 3 vertices. The orthogonal space is generated by the vector of the 3-way interaction
 \begin{equation*}
  X_1X_2X_3=\bordermatrix[{[}{]}]{%
    & \scriptstyle +++ & \scriptstyle -++ & \scriptstyle +-+ & \scriptstyle --+ & \scriptstyle ++- & \scriptstyle -+- & \scriptstyle +-- & \scriptstyle --- \cr
  & 1 & -1 & -1 & 1 & -1 & 1 & 1 & -1}  \ .
 \end{equation*}

 The Hilbert basis is given by the rows of the matrix
 \begin{equation*} H =
 \bordermatrix[{[}{]}]{%
    & \scriptstyle +++ & \scriptstyle -++ & \scriptstyle +-+ & \scriptstyle --+ & \scriptstyle ++- & \scriptstyle -+- & \scriptstyle +-- & \scriptstyle --- \cr
  1 & 1 & 0 & 0 & 0 & 0 & 0 & 0 & 1 \cr
  2 & 0 & 0 & 0 & 0 & 1 & 0 & 1 & 0 \cr
  3 & 0 & 0 & 1 & 0 & 0 & 0 & 1 & 0 \cr
  4 & 0 & 0 & 0 & 1 & 0 & 0 & 0 & 1 \cr
  5 & 0 & 1 & 0 & 0 & 0 & 0 & 1 & 0 \cr
  6 & 0 & 0 & 0 & 0 & 0 & 1 & 0 & 1 \cr
  7 & 0 & 0 & 0 & 0 & 0 & 0 & 1 & 1 \cr
  8 & 1 & 1 & 0 & 0 & 0 & 0 & 0 & 0 \cr
  9 & 1 & 0 & 1 & 0 & 0 & 0 & 0 & 0 \cr
  10 & 1 & 0 & 0 & 0 & 1 & 0 & 0 & 0 \cr
  11 & 0 & 1 & 0 & 1 & 0 & 0 & 0 & 0 \cr
  12 & 0 & 0 & 1 & 1 & 0 & 0 & 0 & 0 \cr
  13 & 0 & 0 & 0 & 1 & 1 & 0 & 0 & 0 \cr
  14 & 0 & 1 & 0 & 0 & 0 & 1 & 0 & 0 \cr
  15 & 0 & 0 & 1 & 0 & 0 & 1 & 0 & 0 \cr
  16 & 0 & 0 & 0 & 0 & 1 & 1 & 0 & 0} \ .
 \end{equation*}

 The previous matrix was computed with a symbolic software. However, we note that each row is obtained by taking a single 1 in the subset where the value of the 3-way interaction equals 1 and another one in the complementary subset, for a total of $4\times4=16$ rows. A proof of the Hilbert basis property could be based on the minimality of the support of such vectors.

 We denote by $s_1,\dots,s_{16}$ the parameters of the $H$-model. The possible reduced supports of limit distributions of the $A$-model are the intersections of the subsets of 6 zeros in each of 16 rows of $H$. For example, if we set $s_1=0$ in the $H$-model, then the set $\set{+++,---}$ has zero probability and the limit model matrix is obtained by conditioning the $A$-model to the remaining support set $\sspace X_1$,
 \begin{equation*}
   A_1 = \bordermatrix[{[}{]}]{%
    & \scriptstyle -++ & \scriptstyle +-+ & \scriptstyle --+ & \scriptstyle ++- & \scriptstyle -+- & \scriptstyle +-- \cr
 0  & 1 & 1 & 1 & 1 & 1 & 1 \cr
 1  & 1 & 0 & 1 & 0 & 1 & 0 \cr
 2  & 0 & 1 & 1 & 0 & 0 & 1 \cr
 3  & 0 & 0 & 0 & 1 & 1 & 1 \cr
 12 & 1 & 1 & 0 & 0 & 1 & 1 \cr
 13 & 1 & 0 & 1 & 1 & 0 & 1 \cr
 23 & 0 & 1 & 1 & 1 & 1 & 0} \ .
 \end{equation*}
 On the subset $\sspace X_1$ the aliasing relation is $X_1X_2+X_1X_3+  X_2X_3 = -1$, therefore one of the interactions depends on the other two interactions. The submatrix with one interaction's row deleted is non-singular. In conclusion, the limit model is the saturated model, i.e. the full simplex of probabilities on $\sspace X_1$.

 We pass now to the discussion of the constrained model. The equality of the marginal distributions reduces to the constrain matrix
 \begin{equation*} C = 
 \bordermatrix[{[}{]}]{%
    & \scriptstyle +++ & \scriptstyle -++ & \scriptstyle +-+ & \scriptstyle --+ & \scriptstyle ++- & \scriptstyle -+- & \scriptstyle +-- & \scriptstyle --- \cr
 1=2 & 0 & 1 & -1 & 0 & 0 & 1 & -1 & 0 \cr
 1=3 & 0 & 1 & 0 & 1 & -1 & 0 & -1 & 0} \ .
 \end{equation*}
 In terms of the parameters $s_1,\dots,s_{16}$ of the $H$-model the constrains are
 \begin{gather*}
   s_{5}s_{8}s_{11}s_{14} + s_{6}s_{14}s_{15}s_{16} - s_{3}s_{9}s_{12}s_{15} - s_{2}s_{3}s_{5}s_{7} = 0\ , \\  s_{5}s_{8}s_{11}s_{14} + s_{4}s_{11}s_{12}s_{13} - s_{2}s_{10}s_{13}s_{16} - s_{2}s_{3}s_{5}s_{7} = 0 \ .
 \end{gather*}
 The intersection of the previous variety with the necessary condition for a border case, i.e. $s_1 \cdots s_{16}=0$, gives the equations of the constrain on the border.

 \subsection*{Notes} The theory of toric ideals is due to Sturmfels\cite{sturmfels:1996}. We do not discuss here an important topic of this area, namely Markov Bases which were introduced in another seminal paper by Diaconis and Sturmfels\cite{diaconis|sturmfels:98}. The border of an $A$-model is discussed in detail in Kahle's thesis\cite{kahle:2010thesis} together with a generalization to general exponential families due to Rauh, Kahle and Ay\cite{rauh|kahle|ay:09}. Here we have associated the border to a special version of the $A$-model using an Hilbert basis as set of canonical statistics, an idea which is mentioned first in Rapallo's thesis\cite{rapallo:2003thesis}. Proofs are published in Malag\`o and Pistone\cite{malago|pistone:1012.0637}. Hilbert basis computations where done using \texttt{4ti2}\cite{4ti2} and \cocoa\cite{CocoaSystem}. Another important topic we do not discuss here is Birch's theorem, see the exposition by Pachter and Sturmfels\cite{pachter|sturmfels:2005}. The discussion outlined in the Examples is new.

 \section{Differentiation of the normalizing constant}\label{sec:diff}
 A key result in exponential families is the relation of the partial derivatives of the cumulant function with the cumulants of the canonical statistics. In particular, the gradient of the cumulant generating function maps the canonical parameters onto the interior of the convex polytope generated by the values of the canonical statistics. We discuss here a version of this in the case of $A$-models. 

 We call \emph{design} any finite set of real vectors. The image of a LEF under the canonical statistics is the \emph{canonical} LEF. Its support is a \emph{design} $\design \subset \integers^m$. In particular, the canonical version of an $A$-model is supported by the design $\design \in \integers_{\ge}^m$ whose points are the columns of the model matrix. 

 The set of all polynomials which are zero on a design is the \emph{design ideal} I$(\design)$. The canonical $A$-model has the form
 \begin{equation*}
   q(x;t) = \prod_{i=1}^m t_i^{x_i}, \quad x \in \design, \quad t_i \ge 0, \quad j=1,\dots,m,
 \end{equation*}
 with normalizing constant (partition function)
 \begin{equation*}
   Z(t) = \sum_{x\in\design} t^x \mu(x) \ .
 \end{equation*}

 In the Weyl algebra $\complex\langle t_1 \dots t_d, \partial_1 \dots \partial_d\rangle$ we define the operators
 \begin{equation*}
 t_i \partial_i - x_i = \partial_i t_i - (1+x_i), \quad i = 1,\dots,m, \quad x \in \design,\end{equation*}
 where the equality follows from the commutation relation $\partial_i t_i = 1 + t_i \partial_i$. For all $x \in \design$ we have
 \begin{equation*}
   (t_i \partial_i - x_i)\bullet t^x = \partial_i \bullet (t_i t^x) - (1+x_i) t^x = 0,
 \end{equation*}
 so that $t_i\partial_i \bullet t^x = x_i t^x$ and, by iteration, \emph{$(t_i\partial_i)^{\alpha} \bullet t^x = x_i^\alpha t^x$}, $\alpha \in \integers_{\ge}$. 

 The operator $(t_i\partial_i)^\alpha$ applied to the polynomial $Z(t) \in \complex[t_1,\dots,t_m]$ gives
 \begin{equation*}
 (t_i\partial_i)^\alpha \bullet Z(t) = \sum_{x \in \design} (t_i\partial_i)^\alpha \bullet t^x \mu(x)= \sum_{x \in \design} x_i^\alpha t^x \mu(x) \ .
 \end{equation*}
 For $i \ne j$ we have the commutation $(t_i\partial_i)(t_j\partial_j) = (t_j\partial_j)(t_i\partial_i)$, hence
 \begin{equation*}
   \prod_{i=1}^m (t_i\partial_i)^{\alpha_i} \bullet Z(t) = \sum_{x \in \design} \prod_{i=1}^m (t_i\partial_i)^{\alpha_i} \bullet t^x \mu(x)= \sum_{x \in \design} \left(\prod_{i=1}^m x_i^{\alpha_i}\right) t^x \mu(x) \ .
 \end{equation*}

 By dividing by the normalizing constant we obtain he following expression for the moments:
 \begin{equation*}
 Z(t)^{-1}  \prod_{i=1}^d (t_i\partial_i)^{\alpha_i} \bullet Z(t) = Z(t)^{-1}  \sum_{x \in \design} \prod_{i=1}^m (t_i\partial_i)^{\alpha_i} \bullet t^x = \expectat t {X^\alpha} \ .
 \end{equation*}

 From the ring homomorphism 
 \begin{equation*}
 A \colon \left\{
 \begin{array}{ccc}
 \complex[x] &\to &\complex\langle t_1 \dots t_m, \partial_1 \dots \partial_m\rangle, \\
 x_i &\mapsto &t_i\partial_i,
 \end{array} \right.
 \end{equation*}
 we have for each polynomial $f \in \reals(x_1,\dots,x_m)$
 \begin{equation*}
   A(f) \bullet Z(t) = \sum_{x\in\design} f(x) t^x \mu(x) \ .
 \end{equation*}

 As $x\in\design$, the polynomial $f$ is identified up to an element of the design ideal. The quotient ring $\reals(x_1,\dots,x_m)/\text{I}(\design)$ has a linear basis $\setof{x^\alpha}{\alpha\in M}$ of monomials called \emph{monomial basis}, with $N = \# M = \#\design$ elements.

 \begin{proposition}\label{prop:differential}
 \begin{enumerate}
 \item 
   Let $\setof{x^\alpha}{\alpha\in M}$, be a monomial basis for $\design$. Then $Z(t)$ satisfies the following system of $N$ linear non-homogeneous differential equations:
   \begin{equation*}
     A(x^\alpha) \bullet Z(t) = \sum_{x\in\design} x^\alpha t^x, \quad \alpha\in M.
   \end{equation*}
 \item \label{prop:differential2}
   Let $f_a(x)$ be the (reduced) indicator polynomial of $a\in \design$. Then $Z(t)$ satisfies the following system of $N$ linear non-homogeneous differential equations:
   \begin{equation*}
     A(f_a) \bullet Z(t) = t^a, \quad a\in\design.
   \end{equation*}
 \label{item:2}
 \item Let $g(p_a \colon a\in\design)$ be a polynomial in the toric ideal of the monomial homomorphism $p_a \mapsto t^a$. Then
   \begin{equation*}
   g\left(A(f_a(x)) \bullet Z(t) \colon a\in\design\right) = 0.
   \end{equation*}
 \end{enumerate}
 \end{proposition}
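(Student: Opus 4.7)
The plan is to derive all three items from the master identity
\[
A(f)\bullet Z(t) \;=\; \sum_{x\in\design} f(x)\, t^x\, \mu(x),
\]
which has already been established in the paragraph just before the proposition, together with the commutation $[t_i\partial_i,t_j\partial_j]=0$ and the eigenrelation $(t_i\partial_i)^{\alpha_i}\bullet t^x = x_i^{\alpha_i} t^x$. These two facts ensure that the map $A\colon x_i\mapsto t_i\partial_i$ is a well-defined ring homomorphism from $\complex[x_1,\dots,x_m]$ into the Weyl algebra and that the master identity is valid for every polynomial $f$.

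For part (1), I would simply specialize the master identity at $f=x^\alpha$ for each $\alpha\in M$. The operator $A(x^\alpha)=\prod_{i=1}^m (t_i\partial_i)^{\alpha_i}$ is unambiguous because the factors commute, and the right-hand side of the identity is, by definition, $\sum_{x\in\design} x^\alpha t^x\mu(x)$. So this item is essentially a restatement, and no additional work is required.

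For part (2), the key step is to argue that the linear map $f \mapsto A(f)\bullet Z(t)$ factors through the quotient ring $\reals[x]/\mathrm{I}(\design)$: if $f\in\mathrm{I}(\design)$ then $f(x)=0$ on every point of $\design$, so the master identity forces $A(f)\bullet Z(t)=0$. Hence $A(f_a)\bullet Z(t)$ depends only on the residue class of $f_a$ modulo $\mathrm{I}(\design)$. Using the defining property of the reduced indicator polynomial, namely $f_a(b)=\delta_{ab}$ on $\design$ (with the $\mu(a)$ normalization absorbed), the sum collapses to the single term $t^a$, yielding the claimed equation. Part (3) then follows at once: by part (2) the quantities $A(f_a)\bullet Z(t)$ coincide with the monomials $t^a$, so evaluating any $g$ that lies in the kernel of the toric homomorphism $p_a\mapsto t^a$ on these values produces $g(t^a:a\in\design)=0$ by definition of the toric ideal.

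The single nontrivial point, and the one I would emphasize, is the descent in part (2). The operator $A(f)$ is \emph{not} zero in the Weyl algebra when $f\in\mathrm{I}(\design)$; it is only zero once applied to the particular polynomial $Z(t)$ whose support is $\design$. This is the step where the design hypothesis is genuinely used, and the rest of the proposition then unspools by substitution.
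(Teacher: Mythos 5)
Your proposal is correct and follows essentially the same route as the paper, which leaves the proposition as an immediate consequence of the master identity $A(f)\bullet Z(t)=\sum_{x\in\design}f(x)t^x\mu(x)$ derived just before it (the paper even flags your key descent step with the remark that ``the polynomial $f$ is identified up to an element of the design ideal''). Your handling of the dropped $\mu$ factor by absorbing the normalization into the reduced indicator polynomial is a reasonable reading of the paper's notation.
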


 In the previous theorem, if the right end sides in the first two Items are expressed in terms of known moments, the equations are homogeneous, e.g. Item \ref{item:2} becomes
 \begin{equation*}
     A(f_a) \bullet Z(t) = p(a;t) Z(t), \quad a\in\design.
 \end{equation*}
 \subsection{Example: 3 binary variables, no 23-- and 123-interactions}
 The model matrix is the same as in \eref{eq:no123} with the 23-row deleted:
 \begin{equation*} A = 
   \bordermatrix[{[}{]}]{%
    & \scriptstyle +++ & \scriptstyle -++ & \scriptstyle +-+ & \scriptstyle --+ & \scriptstyle ++- & \scriptstyle -+- & \scriptstyle +-- & \scriptstyle --- \cr
 0  & 1 & 1 & 1 & 1 & 1 & 1 & 1 & 1 \cr
 1  & 0 & 1 & 0 & 1 & 0 & 1 & 0 & 1 \cr
 2  & 0 & 0 & 1 & 1 & 0 & 0 & 1 & 1 \cr
 3  & 0 & 0 & 0 & 0 & 1 & 1 & 1 & 1 \cr
 12 & 0 & 1 & 1 & 0 & 0 & 1 & 1 & 0 \cr
 13 & 0 & 1 & 0 & 1 & 1 & 0 & 1 & 0} \ .
 \end{equation*}
 The orthogonal space is generated by the missing interactions $X_1X_3$ and $X_1X_2X_3$. Computations where done with the software {\cocoa}. A monomial basis of the design is
 \begin{equation*}
   1, x_{13}, x_{12}, x_3, x_2, x_1, x_{12}x_{13}, x_2x_{13} \ ,
 \end{equation*}
 and the indicator polynomial of the column $A({\scriptsize -++}Two) = 110011$, is expressed in this monomial basis by
 \begin{equation*}
   f_{-++}(x) = \frac12 x_2x_{13} - \frac12 x_{12}x_{13} - \frac14 x_1 + \frac14 x_3 - \frac14 x_{13} + 1 \ .
 \end{equation*}
 It follows that the differential operator of Proposition \ref{prop:differential}\eqref{prop:differential2} is 
 \begin{multline*}
   A(f_{-++}) =\\ 1/2t_2\partial_2t_5\partial_5 - 1/2t_4\partial_4t_5\partial_5 - 1/4t_1\partial_1 + 1/4t_3\partial_3 - 1/4t_5\partial_5 + 1 \ .
 \end{multline*}

 \subsection*{Notes} Here we use the algebraic theory of design which was presented first by Pistone and Wynn\cite{pistone|wynn:1996} and discussed in detail in the quoted monograph\cite{pistone|riccomagno|wynn:2001}. The practical interest and feasibility of the resulting computations is object of current research.

 \section{Markov chain, toric Markov chain}
 \label{sec:MC}
 We consider in this section an homogeneous irreducible Markov chain $X_k$, $k=0,1,\dots$, with state space $V$, initial probability $\pi_0$, transitions $P_{v\to w}$, $v, w \in V$.  Let $\mathcal A = \setof{v\to w}{P_{v\to w} > 0, v \ne w}$ and $ \loops = \setof{v\to v}{P_{v\to v} > 0, v \in V}$. The transitions in $\loops$ are called \emph{loops}. The directed graph of transitions $(V;\mathcal A\cup \loops)$ is defined by $v \to w \in \mathcal A\cup \loops$ if, and only if, $P_{v\to w} > 0$, $v,w \in V$, and it is connected. Let $\omega$ be a trajectory with positive probability, i.e. a path of the graph of transitions, $\omega=\omega_0 \omega_1 \cdots \omega_n$ with $(\omega_{i-1}\to \omega_{i}) \in \mathcal A\cup \loops$, $i=1,\dots,n$. The set of trajectories with $n$ transitions is denoted by $\Omega_n$. For each $\omega\in\Omega_n$, the transition's count is the integer $V\times V$-matrix with elements $N_{v\to w}(\omega) = \sum_{k=1}^n (X_{k-1}=v,X_k=w)$.

 The joint distribution up to the time $n$ on the sample space $\Omega_n$ is a monomial term in the ring $\rationals[\pi_0(v), v\in V; P_a, a\in \arcs\cup\loops]$,
 \begin{align}
 \probat n \omega &= \pi_0(\omega_0) P_{\omega_0\to \omega_1} \cdots P_{\omega_{n-1}\to \omega_n} \notag \\ &= \prod_{v\in V} \pi_0(v)^{(X_0(\omega)=v)} \prod_{a\in \mathcal A\cup \loops} P_a^{N_a(\omega)} \label{eq:MCjoint} \\ &= \prod_{v\in V} \pi_0(v)^{(X_0(\omega)=v)} \prod_{l\in\loops} P_{l}^{N_{l}(\omega)}\prod_{a\in \mathcal A} P_a^{N_a(\omega)} \notag \ . 
 \end{align}

 The sparse matrix whose rows have indexes in $0, V, \arcs\cup\loops$, whose columns have indexes in $\Omega_n$, such that the column $\omega$ is $1$, $\setof{(X_0(\omega)=v)}{v\in V}$, $\setof{N_a(\omega)}{a \in \mathcal A\cup \loops}$, defines a toric statistical model on $\Omega_n$ called \emph{toric Markov chain} (TMC).  

 The  unnormalized density up to the time $n$ of the toric model is
 \begin{align}
 q_n(\omega; t) &= t_0 t_{\omega_0} t_{\omega_0\to \omega_1} \dots  t_{\omega_{n-1}\to \omega_n} \notag \\ &= t_0 \prod_{v\in V} t_v^{(X_0(\omega)=v)} \prod_{a\in \mathcal A\cup \loops} t_a^{N_a(\omega)} \ .
 \label{eq:qTMC}\end{align}

 For example, if the state space is $V=\set{+1,-1}$, a direct computation shows that the TMC is, up to a linear transformation of the canonical parameters, equal to the constrained model of the type of \eref{eq:Ising}, namely
 \begin{equation*}
   \log q_{\alpha,\beta}= \alpha_0 X_0 +  \alpha \sum_{t=1}^{n-1}X_t + \alpha_n X_n + \sum_{t=1}^{n} \beta_t X_{t-1}X_t \ .
 \end{equation*}
 The constrain is a linear constrain on the canonical parameters $\alpha_t = \alpha$, $t=1,\dots,n-1$. In fact, the constrained model is an exponential family.

 In the TMC the $t$'s parameters are not required to be transition probabilities, therefore the Markov chain model is a submodel of the TMC model given by linear restrictions on the $t$'s. It follows that the MC is not an exponential family, but it is a curved exponential family. 

 More precisely, as it is seen from the product form, the TMC from time 0 to time $n$ is a special Markov process with non-homogeneous transition probabilities. If the transition probabilities are homogeneous, then the TMC is a Markov chain. Moreover, we note that the joint distributions of the TMC does not form a projective system for different $n$'s, in particular classical results on the mixture representation of processes whose sufficient statistics are transition's counts do not apply here.
 \begin{proposition}
  Let us define the vector $S(v)=\sum_{w \colon v\to w \in \mathcal A \cup \loops} t_{v_\to w}$, $v\in V$. The TMC on $\Omega_n$ is a MC if, and only if, it is constant, say $S(v)=S$, $v\in V$.
 \end{proposition}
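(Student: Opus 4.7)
The plan is to reduce the question to an elementary eigenvector condition on the matrix $T$ with entries $T_{v,w}=t_{v\to w}$ (and $0$ if $v\to w\notin\arcs\cup\loops$). First I would rewrite the unnormalized joint in \eref{eq:qTMC} in matrix form as
\begin{equation*}
q_n(\omega;t) = t_0 \, t_{\omega_0}\prod_{k=1}^n T_{\omega_{k-1},\omega_k},
\end{equation*}
so the partition function factors as $Z_n(t)=t_0\sum_{v\in V}t_v(T^n\mathbf 1)_v$, where $\mathbf 1$ is the all-ones column. Note in particular that $(T\mathbf 1)_v=S(v)$ in the notation of the proposition.

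Next I would compute the conditional probability $P_n(X_k=w\mid X_0=v_0,\dots,X_{k-1}=v_{k-1})$ by summing the numerator over the tails $(\omega_{k+1},\dots,\omega_n)$: the numerator becomes $T_{v_{k-1},w}(T^{n-k}\mathbf 1)_w$ and the denominator becomes $(T^{n-k+1}\mathbf 1)_{v_{k-1}}$. The key observation from this formula is twofold: the conditional probability depends on the past only through $v_{k-1}$ (so the \emph{Markov property} is automatic for any choice of $t$), while its possible $k$-dependence is carried entirely by the vector $u_k:=T^{n-k}\mathbf 1$.

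For the TMC to be a \emph{homogeneous} MC, the one-step kernel
\begin{equation*}
P^{(k)}_{v\to w} \;=\; \frac{T_{v,w}\,u_k(w)}{u_{k-1}(v)}
\end{equation*}
must be independent of $k$. Comparing $k=n$ (where $u_n=\mathbf 1$, so $P^{(n)}_{v\to w}=T_{v,w}/S(v)$) with $k=n-1$ (where $u_{n-1}=T\mathbf 1$, whose $w$-entry is $S(w)$), I get, on every arc with $T_{v,w}>0$, the equality $(T^2\mathbf 1)_v = S(v)S(w)$. The left side does not depend on $w$, hence $S$ is constant on the out-neighbours of each $v$, and by the standing assumption that the transition graph is connected/irreducible $S(v)\equiv S$ follows. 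Conversely, if $S(v)\equiv S$ then $T\mathbf 1 = S\mathbf 1$, so $u_k=S^{n-k}\mathbf 1$ and $P^{(k)}_{v\to w}=T_{v,w}/S$ is a genuine stochastic matrix, giving a homogeneous Markov chain.

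The only step that requires care is passing from ``$S$ is constant on the support of $T_{v,\cdot}$ for every $v$'' to ``$S$ is constant on $V$'': this is where I would invoke the irreducibility/connectedness of $(V,\arcs\cup\loops)$ stated at the beginning of Section \ref{sec:MC}. Everything else is bookkeeping with the eigenvector identity $T\mathbf 1=S\mathbf 1$.
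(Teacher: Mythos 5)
Your argument is correct and follows essentially the same route as the paper: you marginalize over the tail of the trajectory to get the backward normalizers $u_k=T^{n-k}\mathbf 1$, observe that the step-$k$ conditional kernel is $T_{v,w}\,u_k(w)/u_{k-1}(v)$, and equate the $k=n$ and $k=n-1$ kernels to force $S(v)S(w)=\sum_u t_{v\to u}S(u)$ on every arc --- which is exactly the paper's comparison of $\probat n{X_n=w \mid X_{n-1}=v}$ with $\probat n{X_{n-1}=w \mid X_{n-2}=v}$, only written in matrix form. If anything, you are more explicit than the paper about the last step, namely passing from ``$S$ constant on each out-neighbourhood'' to ``$S$ constant on $V$'' via connectedness, a point the paper simply asserts.
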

 \begin{proof}
 We denote by $\probat n \omega$ the probability of a trajectory $\omega\in\Omega_n$. A transition matrix $P$ is obtained by normalizing of the $t_a$'s,
 \begin{equation*}\label{eq:trans}
 P_{v\to w} = \frac{t_{v\to w}}{S(v)}, \quad S(v) = \sum_{w\in V} t_{v\to w}, \quad (v\to w) \in \arcs\cup\loops,
 \end{equation*}
 and $P_{v\to w} = 0$ if $(v\to w)\notin\arcs\cup\loops$. As $q_n(v_0v_1\cdots v_{n-1}v_n;t) =q_{n-1}(v_0v_1\cdots v_{n-1};t)t_{v_{n-1}\to v_n}$, the marginal unnormalized  density  up to time $(n-1)$ is
 \begin{equation*}
 \sum_{v_n \in V} q_n(v_0v_1\cdots v_{n-1}v_n;t) =q_{n-1}(v_0v_1\cdots v_{n-1};t)  S(v_{n-1})\ ,
 \end{equation*}
 and the conditional probabilities are
 \begin{equation*}
 \probat n{X_n=v_n \vert X_{n-1} = v_{n-1} \dots  X_0 = v_0} = \frac{q_{n-1} \ t_{v_{n-1}\to v_n}}{q_{n-1}
 S(v_{n-1})}= P_{v_{n-1}\to v_n} \ .
 \end{equation*}
 The marginal unnormalized  density  up to time $(n-2)$ is
 \begin{equation*}
 \sum_{v_{n-1}}  q_{n-1} S(v_{n-1}) =   q_{n-2} \sum_{v_{n-1}} t_{v_{n-2}\to v_{n-1}}S(v_{n-1}) \ ,
 \end{equation*}
 and the conditional probabilities are given by
 \begin{equation*}
 \probat n{X_{n-1}=v_{n-1} \vert \  X_{n-2}=v_{n-2} \dots  X_0=v_0}=\frac{t_{v_{n-2}\to v_{n-1}}
 S(v_{n-1})}{\sum_{v_{n-1}} t_{v_{n-2}\to v_{n-1}}S(v_{n-1})} \ .
 \end{equation*}
 For generic vertices $v,w \in V$ we have:
 \begin{align*}
 \probat n{X_n=w \vert \  X_{n-1}=v} &= \frac{ t_{v\to w}}{ S(v)},\\ 
 \probat n{X_{n-1}=v \vert \ X_{n-2}=w} &= \frac{t_{v\to w} S(w)}{ \sum_{u} t_{v\to u}S(u)} \ ,
 \end{align*}
 hence TMC is an homogeneous MC if, and only if,
 \begin{equation*}
 S(w) =\frac{ \sum_{u} t_{v\to u}S(u)}{ S(w)} \ ,
 \end{equation*}
 i.e. if and only if $S(v)=S$, $v\in V$. In such a case the $t_{v\to w}$ is proportional to the transition matrix $P_{v\to w}$.
 \end{proof}

 A second description of the relation between the MC and the TMC follows by writing $t_{v\to w}$ as $S(v)P_{v\to w}$ in \eref{eq:qTMC}. The unnormalized density $q_n(\omega; t)$ can be re-written as
 \begin{align*}
 q_n(\omega; t) &= t_0 \prod_{v\in V} t_v^{(X_0(\omega)=v)} \prod_{v\to w \in \mathcal A \cup \loops} S(v)\ P_{v\to
 w}^{N_{v\to w}(\omega)} \\ &=  t_0 \left( \prod_{v\in V} t_v^{(X_0(\omega)=v)} \prod_{v\to w \in
 \mathcal A \cup \loops}P_{v\to w}^{N_{v\to w}(\omega)}\right) \prod_{v\in V} S(v)^{N_{v\to \cdot}(\omega)},
 \end{align*}
 where $N_{v\to \cdot} = \sum_{w\in \out(v)} N_{v\to w}$ is the number of exits from $v$, including transitions from $v$ to $v$ itself. This distribution is the distribution of a MC with the added weight $\prod_{v\in V} S(v)^{N_{v\to \cdot}}$. Or, we can say that the TMC, given the number of outs $N_{v\to\cdot}$, $v\in V$, is a MC. 

 The normalizing constant of a TMC on a graph $(V,\arcs\cup\loops$ is $Z(t) = \sum_{\omega\in\Omega_n} q(\omega;t)$ and the normal equations for the maximum likelihood estimation can be written as polynomial equations with the operator introduced in \sref{sec:diff} as $t_a\partial_a Z(t) = N_a(\omega) Z(t)$, $a\in \arcs\cup\loops$. The existence of a solution of the normal equation stems from the general theory of exponential families if the transition's observed values are strictly positive. If it is not the case, we can find a border solution using the following characterization of transition's counts. 
 \begin{proposition}\label{prop:grande}
 An integer matrix $N \in \integers_{\ge}^{V\times V}$ is the transition count of a trajectory $\omega$ if, and only if, it is connected and $\sum_w N_{v\to w} - \sum_w N_{w\to w}$ is either 0 for all $v \in V$ or is $+1$ for a vertex $v_0$, $-1$ for a different vertex $v_n$, and zero for all $v\ne v_0,v_n$.
 \end{proposition}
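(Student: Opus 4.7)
The statement is essentially Euler's theorem for directed multigraphs, rephrased in the language of transition counts. The matrix $N\in\integers_{\ge}^{V\times V}$ encodes a directed multigraph $G_N$ on $V$ (restricted to the vertices incident to a positive entry) with $N_{v\to w}$ parallel arcs from $v$ to $w$, and a trajectory $\omega$ with transition count $N$ is precisely an Eulerian trail in $G_N$. The condition as written contains an evident typo: the second sum should read $\sum_w N_{w\to v}$, i.e.\ the in-degree of $v$, and the condition asserts that out-degree equals in-degree at every vertex (closed case) or differs by $+1$ at a single source $v_0$ and by $-1$ at a single sink $v_n\ne v_0$ (open case).

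For necessity I would argue directly from the trajectory $\omega_0\omega_1\cdots\omega_n$. Each time $\omega$ passes through a vertex $v$ at an intermediate position $k$, the step $\omega_{k-1}\to\omega_k$ contributes one to $\sum_w N_{w\to v}$ and the step $\omega_k\to\omega_{k+1}$ contributes one to $\sum_w N_{v\to w}$, so these contributions balance. Only the initial position $\omega_0$ contributes an unmatched $+1$ to the out-degree and only the terminal position $\omega_n$ contributes an unmatched $+1$ to the in-degree; when $\omega_0\ne\omega_n$ this gives the stated imbalances, and when $\omega_0=\omega_n$ the two unmatched contributions cancel at the single vertex and all out-in differences vanish. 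Connectivity of the arc-support of $N$ is immediate, since the arcs with $N_{v\to w}>0$ appear consecutively along $\omega$.

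For sufficiency I would use a Hierholzer-type construction. Starting at $v_0$ (or, in the closed case, at any vertex incident to a positive entry) I greedily extend a partial trail by consuming one available outgoing arc at a time and decrementing the corresponding multiplicity in $N$. The balance hypothesis is preserved in the sense that, after $k$ steps, the residual matrix inherits exactly the same out-in imbalance structure between the current endpoint and $v_n$; in particular, at every intermediate vertex different from $v_n$ at least one outgoing arc remains, so the extension can only halt at $v_n$. This produces a trail from $v_0$ to $v_n$ whose residual $N'$ has out-in difference zero at every vertex. If $N'=0$ we are done; otherwise I find a vertex $u$ occurring on the current trail with $N'_{u\to\cdot}>0$, run the same greedy procedure from $u$ in the balanced $N'$ to obtain a closed trail at $u$, and splice it into the existing trail at an occurrence of $u$. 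Iterating on the strictly decreasing quantity $\sum_{v,w} N_{v\to w}$ terminates in a trajectory realising $N$.

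The main obstacle is the splicing step: one must show that whenever $N'\ne 0$ some vertex with a positive residual out-degree already lies on the trail built so far. This is where connectivity of the arc-support of $N$ is used. If no such vertex existed, the residual support would form a union of components of $G_N$ disjoint from every vertex visited by the current trail, contradicting weak connectivity of $G_N$. The degenerate length-zero case, in which $N$ is the zero matrix and the trajectory is the single vertex $\omega=v$, serves as the base, and the closed case $v_0=v_n$ is handled identically by choosing the common starting/ending vertex to be any vertex with positive out-degree.
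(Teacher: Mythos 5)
The paper itself contains no proof of this proposition: the Notes at the end of Section~\ref{sec:MC} simply defer to Grande's thesis, so there is no internal argument to compare yours against. On its own terms your proof is correct. You are right that $\sum_w N_{w\to w}$ must be read as $\sum_w N_{w\to v}$ (the in-degree of $v$); this is consistent both with the definition $N_{v\to w}(\omega)=\sum_{k=1}^n(X_{k-1}=v,X_k=w)$ and with the paper's later remark, in the proof of Proposition~\ref{prop:monimialcycles}, that a closed trajectory has ``as many \emph{ins} as \emph{outs} at each vertex.'' With that reading the statement is exactly the directed Euler-trail theorem, and your necessity-by-local-counting plus Hierholzer splicing is the standard and complete route. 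Two small points are worth making explicit in a written-up version: first, the cleanest way to justify ``the greedy walk can only halt at $v_n$'' is to observe that the partial walk is itself a trajectory, apply the (already proved) necessity direction to its own count $M$, and read off the imbalance of the residual $N-M$ at the current endpoint; second, in the splicing step you should note that because the residual is balanced, a vertex has positive residual out-degree if and only if it is incident to \emph{any} residual arc, which is what converts ``no spliceable vertex lies on the trail'' into a genuine disconnection of the support of $N$ and hence a contradiction. You handle the degenerate $N=0$ case and the closed case correctly.
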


 An observed transition's count $N(\omega)$ is connected and defines a sub-graph of the model graph by the positivity condition $N_a(\omega)>0$. It follows that the normal equations have a solution in a submodel.

 We have defined a toric ideal associated with the unnormalized densities on $\Omega_n$. Now we want to consider all trajectories, i.e. trajectories of any length $n$. We need first to fix a more precise language for closed trajectories.

 A \emph{closed trajectory} $\omega = v_0 v_1 \cdots v_{n-1}v_0$ is any trajectory going from an initial $v_0$ back to $v_0$; $r\omega = v_0 v_{n-1} \cdots v_1 v_0$ is the reversed closed trajectory. If we do not distinguish any initial vertex, the equivalence class of closed trajectories is called a \emph{trail}. A closed trajectory is \emph{elementary} if it has no proper closed sub-trajectory, i.e. if does not meet twice the same vertex except the initial one $v_0$. The trail of an elementary closed path is a \emph{cycle}. The set of cycles $\cycles$ is finite. A trajectory $\omega = \omega_0 \omega_1 \cdots \omega_{n-1} \omega_n$ is elementary if does not contain any cycle.

 Given a transition matrix $P$ and an initial probability $\pi_0$, we compute the probability of any trajectory as $\probof{\omega} = \pi_0(\omega_0) \left(\prod_{v,w\in V} P_{v\to w}^{N_{v\to w}(\omega)}\right)$. The factor $\left(\prod_{v,w\in V} P_{v\to w}^{N_{v\to w}(\omega)}\right)$ does not depend on the initial point $\omega_0$ if the trajectory is closed; in fact in such a case the matrix $N(\omega)$ is a function of the trail only.
 \begin{definition} Consider the ring $k[t_0; t_v,v\in V; t_a,a\in\arcs\cup\loops]$.
   \begin{enumerate} 
   \item The \emph{Markov monomial ideal} is the monomial ideal generated by the monomials $t_0 \prod_{v\in V} t_v^{(X_0(\omega)=v)} \prod_{a\in \mathcal A\cup \loops} t_a^{N_a(\omega)}$, where $\omega$ is a trajectory, $\omega\in \Omega = \cup_n \Omega_n$.
   \item The \emph{stationary Markov ideal} is the ideal generated by the Markov monomial ideal and by the equations of stationarity $\sum_{w\in\out(v)} t_v t_{v\to w} = t_w$, $w\in V$.
 \item The \emph{ideal of closed trajectories} is the monomial ideal generated by the monomials  $\prod_{a\in \arcs\cup\loops} t_{a}^{N_{a}(\omega)}$, $\omega$ closed, in the ring $k[t_a \colon a \in \arcs\cup\loops]$.   
   \end{enumerate}
 \end{definition}
 \begin{proposition}\label{prop:monimialcycles}
   \begin{enumerate}
   \item For each trajectory $\omega$ there exist an elementary sub-trajectory $\omega_{\text{e}}$, possibly empty, and nonnegative integers $\lambda(c)$, $c\in\cycles$, such that $N(\omega) = N(\omega_{\text{e}}) + \sum_{c\in \cycles} \lambda(c) N(c)$. All matrices $N(\omega_{\text{e}})$ and $N(c)$, $c\in\cycles$ are boolean. If $\omega_{\text{e}}$ is not empty, it has the same initial and final point as $\omega$.
 \item \label{item:monimialcycles2} The monomial ideal of closed trajectories is generated by the cycles. The monomials associated to a cycle are square-free.
 \item The Markov monomial ideal is generated by the cycles and the elementary trajectories.
 \end{enumerate}
 \end{proposition}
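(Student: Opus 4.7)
The plan is to reduce Items (2) and (3) to Item (1), since the monomials defining both ideals factor multiplicatively according to the additive splitting of $N(\omega)$ given in Item (1). So the entire content sits in Item (1).

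For Item (1), I would argue by induction on the length $n$ of $\omega = \omega_0 \omega_1 \cdots \omega_n$. If $\omega$ is elementary, set $\omega_{\text{e}} := \omega$ and all $\lambda(c) = 0$. Otherwise pick indices $i < j$ with $\omega_i = \omega_j$ and $j-i$ minimal; minimality guarantees that no vertex is repeated in the interior of the sub-trajectory $c^{*} := \omega_i \omega_{i+1} \cdots \omega_j$, so $c^{*}$ is an elementary closed trajectory and its trail is a cycle $c_0 \in \cycles$. Form $\omega' := \omega_0 \cdots \omega_i \omega_{j+1} \cdots \omega_n$; then $N(\omega) = N(\omega') + N(c^{*})$, and $\omega'$ is strictly shorter with the same endpoints as $\omega$. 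Applying induction to $\omega'$ and incrementing the coefficient of $c_0$ by one yields the claimed decomposition. The endpoint assertion on $\omega_{\text{e}}$ follows because each cycle excision preserves endpoints. The boolean property of $N(c)$ for $c \in \cycles$ is immediate from the elementary closed form of $c$ (no arc is repeated). For $N(\omega_{\text{e}})$, if an arc $(v \to w)$ appeared twice in $\omega_{\text{e}}$ then $v$ would be revisited, producing a closed sub-trajectory and contradicting the elementary character of $\omega_{\text{e}}$.

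Items (2) and (3) follow by converting the additive decomposition of Item (1) into a multiplicative identity
\begin{equation*}
  \prod_{a \in \arcs \cup \loops} t_a^{N_a(\omega)} \;=\; \prod_{a} t_a^{N_a(\omega_{\text{e}})} \cdot \prod_{c \in \cycles} \left(\prod_{a} t_a^{N_a(c)}\right)^{\lambda(c)}.
\end{equation*}
For Item (2), observe that a nonempty elementary closed trajectory is impossible, since it would contain itself as a cycle. Hence, when $\omega$ is closed, the endpoint condition in Item (1) forces $\omega_{\text{e}}$ to be empty, and the monomial of $\omega$ becomes a product of cycle monomials; square-freeness of each cycle monomial is exactly the boolean statement. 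For Item (3), the full Markov monomial carries the extra prefix $t_0 t_{\omega_0}$, which naturally attaches to the generator associated with $\omega_{\text{e}}$ (viewed, when empty, as the length-zero trajectory at $\omega_0$, which is trivially elementary), so the generating set reduces to the union of elementary-trajectory monomials and cycle monomials.

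The main obstacle is the cycle-extraction step: one must justify that the minimal-gap choice of repeated vertex yields a genuinely elementary closed sub-trajectory, not merely a closed one. Once that geometric fact is secured, the induction, the endpoint tracking, the two booleanness arguments, and the monomial factorizations feeding Items (2) and (3) are bookkeeping.
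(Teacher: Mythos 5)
Your proof is correct and follows essentially the same route as the paper: both extract an elementary closed sub-trajectory (the paper takes the first closed sub-trajectory encountered, you take a minimal-gap repeated vertex, which amounts to the same thing), iterate to obtain the additive decomposition of $N(\omega)$, and then translate it multiplicatively to get Items (2) and (3). Your version merely makes explicit the endpoint tracking, the two booleanness claims, and the fact that $\omega_{\text{e}}$ is empty when $\omega$ is closed, all of which the paper's terser proof leaves implicit.
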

 \begin{proof}
   \begin{enumerate}
   \item Let $\omega=v_0\cdots v_n$ be a trajectory and consider the first closed trajectory encountered, $v_0\cdots v_h \cdots v_k(=v_h) v_{k+1} \cdots v_n$, if any. Then $c_1=v_h v_{h+1} \cdots v_k(=v_h)$ is an elementary closed trajectory and $\omega_{\text{r}} = v_0 \cdots v_h v_{k+1} \cdots v_n$ is either empty or a trajectory. Hence $N(\omega) = N(\omega_{\text{r}}) + N(c_1)$ and the iterations stops after a finite number of steps.
   \item If $\omega$ is closed, then $N(\omega) = \sum_{c\in\cycles} \lambda(c) N(c)$, hence
 $\prod_{a\in\arcs\cup\loops} t_a^{N_a(\omega)} = \prod_{c\in\cycles} \left(\prod_{a\in\arcs\cup\loops} t_a^{N_a(c)}\right)^{\lambda(c)}$. On a closed trajectory $\omega$, the matrix of transition counts
 \begin{equation*}
   [N_{v,w}(\omega)]_{v,w\in V} = \sum_{k=1}^n (X_{k-1}(\omega)=v,X_k(\omega)=w)
 \end{equation*}
 has row sums equal to column sums, i.e. there are as many \emph{ins} as \emph{outs} at each vertex, see Prop. \ref{prop:grande}. 
 \item If the trajectory is closed, then we can apply the previous item. Otherwise $\omega$ and $\omega_e$ start at the same vertex and
   \begin{equation*}
     t_0 \prod_{v\in V} t_v^{(X_0(\omega)=v)} \prod_{a\in \mathcal A\cup \loops} t_a^{N_a(\omega)} = t_0 \prod_{v\in V} t_v^{(X_0(\omega_{\text{e}})=v)} \prod_{a\in \mathcal A\cup \loops} t_a^{N_a(\omega_{\text{e}})} \ .
   \end{equation*}

 \end{enumerate}
 \end{proof}

 In a sense, the joint distribution of each trajectory is characterized by the Markov monomial ideal.
 \begin{proposition}
 Let $P$ be an irreducible Markov matrix.  The distribution of the stationary Markov chain is a function of transition's monomials of the cycles.
 \end{proposition}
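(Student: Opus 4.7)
The plan is to apply the cycle decomposition of Proposition \ref{prop:monimialcycles}(1) to the product of transition probabilities in the joint law of the stationary chain. For any trajectory $\omega\in\Omega_n$, writing $N(\omega)=N(\omega_e)+\sum_{c\in\cycles}\lambda(c)N(c)$ with $\omega_e$ an elementary sub-trajectory between the same endpoints as $\omega$, one obtains
\begin{equation*}
\prod_{a\in\arcs\cup\loops}P_a^{N_a(\omega)}=\Bigl(\prod_{a}P_a^{N_a(\omega_e)}\Bigr)\prod_{c\in\cycles}M_c^{\lambda(c)},\qquad M_c:=\prod_{a}P_a^{N_a(c)} \ .
\end{equation*}
For a closed $\omega$ the elementary factor is $1$ and the entire product is already a monomial in the cycle monomials $M_c$.

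The joint law of the stationary chain is $\Prob(\omega)=\pi(\omega_0)\prod_aP_a^{N_a(\omega)}$ with $\pi$ the unique stationary probability, which exists by irreducibility of $P$ and finiteness of $V$. It remains to exhibit the prefactor $\pi(\omega_0)\prod_aP_a^{N_a(\omega_e)}$ as a function of the $M_c$. The natural tool is the Markov chain tree theorem, $\pi(v)\propto\sum_{T\in\mathcal T_v}\prod_{e\in T}P_e$, summed over spanning in-arborescences rooted at $v$: adjoining the elementary trajectory $\omega_e$ to an arborescence rooted at $v_0=\omega_0$ produces a functional digraph in which every vertex has out-degree one, whose cyclic components are by construction elements of $\cycles$. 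The product of edge weights along such a configuration then factors as a product of cycle monomials times weights on tree branches, and these tree branches are arranged to cancel against the analogous terms appearing in the normalizing denominator $\sum_u\sum_{T\in\mathcal T_u}\prod_{e}P_e$.

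The main obstacle, and the real content of the proposition, is making the cancellation above rigorous: one must produce a combinatorial matching between arborescence terms in the numerator (with $\omega_e$ attached) and arborescence terms in the denominator, so that the unmatched residuals assemble exactly into products of the $M_c$. A cleaner but less elementary route is Kalpazidou's cycle representation for irreducible recurrent Markov chains, which provides nonnegative cycle weights $\mu(c)$, $c\in\cycles$, satisfying $\pi(v)P_{v\to w}=\sum_{c\ni(v\to w)}\mu(c)$ for every arc $v\to w$. If the $\mu(c)$ are shown to depend on $P$ only through the monomials $M_c$ — which one verifies by solving the linear system they satisfy against the cycle/arc incidence matrix — then rewriting $\Prob(\omega)=\pi(v_0)P_{v_0\to v_1}\cdot P_{v_1\to v_2}\cdots P_{v_{n-1}\to v_n}$, substituting the representation in the leading factor, and iterating along $\omega$ immediately yields the claim.
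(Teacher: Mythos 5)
Your reduction is sound as far as it goes: decomposing $N(\omega)$ via Proposition~\ref{prop:monimialcycles}(1) isolates a product of cycle monomials times the prefactor $\pi(\omega_0)\prod_a P_a^{N_a(\omega_e)}$, and the whole content of the proposition is to show that this prefactor is itself a function of the cycle monomials. That is exactly the step you do not carry out. The arborescence route is not clearly workable as written: the products $\prod_{e\in T}P_e$ over spanning in-arborescences are cycle-free monomials, so neither the numerator nor the denominator in the tree-theorem formula for $\pi(v)$ is individually a function of the $M_c$, and a ratio of two sums does not admit the term-by-term ``cancellation of tree branches'' you invoke --- one would need an explicit matching or common factorization, which you acknowledge you have not produced. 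The Kalpazidou route likewise hinges on the unproved assertion that the cycle weights $\mu(c)$ depend on $P$ only through the $M_c$; that assertion is essentially equivalent to the proposition itself, so invoking it begs the question unless the linear system you allude to is actually solved.

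The paper closes the gap with a short limiting argument that avoids both trees and cycle weights. Fix an arc $v\to w$ with $P_{v\to w}>0$ and let $\Omega_k(w\to v)$ be the trajectories from $w$ to $v$ of length $k$. Prepending the arc turns each $\omega\in\Omega_k(w\to v)$ into a closed trajectory $(v\to w)\omega$, whose pure transition monomial $\probof{(v\to w)\omega}/\pi(v)$ is a product of cycle monomials by Proposition~\ref{prop:monimialcycles}(2). Summing over $\Omega_k(w\to v)$ gives $P_{v\to w}P^{(k)}_{w\to v}$, and the Ces\`aro limit in $k$, which exists and is positive by irreducibility, equals $\pi(v)P_{v\to w}$. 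Hence every entry $P(v,w)=\pi(v)P_{v\to w}$ of the two-step joint distribution is a limit of sums of products of cycle monomials, i.e.\ a function of them, and the two-step joint distribution determines the whole stationary law. If you want to salvage your argument, the cleanest fix is to prove this single lemma about $\pi(v)P_{v\to w}$ and then feed it back into your decomposition; as it stands, your proposal correctly locates the difficulty but leaves it open at precisely the point where the proof is needed.
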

 \begin{proof}
Consider $v \ne w$ with $P_{v\to w} > 0$ and define the sets $\Omega_k(w\to v)$ of the trajectories from $w$ to $v$ of length $k$, $k=1,2,\dots$. For each $\omega\in\Omega_k(w\to v)$, the trajectory $(v\to w)\omega=v(\omega_0=w)\cdots (\omega_k=v)$ is closed. Because of the Markov property and irreducibility, we have the Cesaro limit $\pi(v) = \lim_{k\to\infty} P_{w\to v}^{(k)} > 0$, so that
 \begin{multline*}
 \lim_{k\to\infty} \sum_{\omega\in\Omega(k,w\to v)} \probof{(v\to w)\omega}/\pi(v) = \\ \lim_{k\to\infty}  P_{v\to w} \sum_{\omega\in\Omega(k,w\to v)} \probof{\omega|\omega_0=w} = \\ P_{v \to w} \lim_{k\to\infty} P^{(k)}_{w\to v} = \pi(v)P_{v \to w} \ .   
 \end{multline*}
As $\probof{(v\to w)\omega}/\pi(v)$ is a product of cycle's monomials because of Prop. \ref{prop:monimialcycles}\eqref{item:monimialcycles2}, all values $P(v,w) = \pi(v)P_{v \to w}$ of the 2-step joint distribution depend on the values of the cycle's monomials.
\end{proof}

We note that the values of cycle's monomials are dependent. For example, in the complete graph with 4 vertices there are 20 cycles (including the 4 loops), while the number of degrees of freedom for a generic transition probability on 4 points is 12.

 \subsection*{Notes} The name TMC was used first by Pachter and Sturmfels\cite[Ch. 1 Statistics]{pachter|sturmfels:2005}. See also Hare and Takemura \cite{takemura|hara:1004.3599T,hara|takemura:1005.1717H} for slightly different definitions. The representation of processes whose sufficient statistics are transition's count is a version of de Finetti exchangeability, see Freedman\cite{freedman:1962invariants}. Prop. \ref{prop:grande} is proved Grande\cite{grande:2011tesi}. A TMC is a particular case of a graphical model in the sense of Lauritzen\cite{lauritzen:1996}. A related issue is the characterization of the distribution of stationary Markov chains via a mixture on monomials on cycles that was obtained by McQueen\cite{macqueen:1981}, cfr. Kalpazidou\cite{kalpazidou:2006}.
 \section{Reversible Markov chains}
 A transition matrix $P_{v\to w}$, $v, w \in V$, satisfies the \emph{detailed balance} (DB) condition if $\kappa(v)P_{v\to w} = \kappa(w)P_{w\to v}$, $v, w \in V$ for some strictly positive $\kappa(v) > 0$, $v \in V$. As a consequence, $\pi(v) \propto \kappa(v)$ is an invariant probability and the stationary Markov chain $(X_n)_{n\in\integers_{\ge}}$ has \emph{reversible} two-step joint distribution $\probof{X_n = v, X_{n+1} = w} = \probof{X_n = w,X_{n+1}=v}$, $v, w \in V$, $n \ge 0$. The distribution of the Markov chain is uniquely parameterized by its symmetric two-step joint distribution. 

 The DB assumption is trivially satisfied for $v=w$ and moreover $P_{v\to w} > 0$ if, and only if, $P_{w\to v} > 0$. Given an undirected connected graph $\mathcal G = (V,\edges)$ with no loops and the directed graph $(V,\arcs)$ whose arcs are the two directions of each edge, we consider here all transition probabilities such that $P_{v\to w} = 0$, $v\ne w$, if, and only if, $\overline{vw} \notin \edges$. Let $\loops$ denote loops with positive transition $P_{v\to v}$ and $\pi$ the invariant probability.

 For each trajectory $\omega=v_0\cdots v_n$ in the graph $\mathcal G$ let $r\omega=v_n\cdots v_0$ be the \emph{reversed trajectory}. The reversed probability is $\probat r \omega = \probof{r\omega}$. From \eref{eq:MCjoint} we compute the likelihood
 \begin{align*}
 \frac{\probof \omega}{\probat r \omega} &= \frac{\prod_{v\in V} \pi(v)^{(X_0(\omega)=v)} \prod_{a\in \mathcal A\cup \loops} P_a^{N_a(\omega)}}{\prod_{v\in V} \pi(v)^{(X_0(r\omega)=v)} \prod_{a\in \mathcal A\cup \loops} P_a^{N_a (r\omega)}} \\ &= \frac{\prod_{v\in V} \pi(v)^{(X_0(\omega)=v)} \prod_{a\in \mathcal A} P_a^{N_a(\omega)}}{\prod_{v\in V} \pi(v)^{(X_n(\omega)=v)} \prod_{a\in \mathcal A} P_{ra}^{N_a(\omega)}}  \\ &= \prod_{v\in V} \pi(v)^{(X_0(\omega)=v)-(X_n(\omega)=v)} \prod_{a\in \mathcal A} \left(\frac{P_a}{P_{ra}}\right)^{N_a(\omega)} \ ,
 \end{align*}
 the log-likelihood
 \begin{multline*}
  \logof{\frac{\Prob}{\Prob_r}} = \\ \sum_{v\in V} \logof{\pi(v)} ((X_0=v)-(X_n=v)) + \sum_{a \in \mathcal A} \logof{\frac{P_a}{P_{ra}}} N_a \ .
 \end{multline*}
 because of the stationarity of $\pi$, the divergence is
 \begin{multline*}
 \divergence{\Prob}{\Prob_r} = \\ \sum_{v\in V} \logof{\pi(v)} \expectat{\Prob}{(X_0(\omega)=v)-(X_n(\omega)=v)} + \sum_{a\in \mathcal A} \logof{\frac{P_a}{P_{ra}}} \expectat{\Prob}{N_a} \\ = n \sum_{(v\to w) \in \mathcal A} \logof{\frac{P_{v\to w}}{P_{w\to v}}} \pi(v) P_{v\to w} \\ = n \sum_{v, w \in V} \logof{\frac{P(v,w)}{P(w,v)}} P(v,w) \ ,
 \end{multline*}
 where $P(v,w)$ is the two-step joint distribution. As the divergence is zero if, and only if, the probabilities are equal, the DB condition is equivalent to $\Prob=\Prob_r$. The last statement could be easily derived otherwise, but the computation of the divergence has an independent interest, e.g. it shows the linear dependence on $n$ of the divergence.

 Let $\omega$ a \emph{closed trajectory} and let $r\omega$ its reversed trajectory. In the previous section we have shown that the distribution of the Markov chain is uniquely characterized by the initial probability, the loop transitions $P_{v\to v}$, $v \in V$, and the monomials $\prod_{a\in\arcs} P_a^{N_a(\omega)} = P^{\omega}$, for each closed and loop-free $\omega$. 

 In the case of a reversible chain, these monomials are invariant under the reversion.
 \begin{proposition}[Kolmogorov]
 Let the Markov chain $\left(X_n\right)_{n\in \integers_{\ge}}$ have its transitions supported by the connected graph $\mathcal G$. The MC is reversible if, and only if, $P^\omega = P^{r\omega}$ for all closed trajectory $\omega$. 
 \end{proposition}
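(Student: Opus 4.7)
The plan is to establish the two directions separately, with the nontrivial direction constructing a candidate stationary distribution $\kappa(v)$ using a path integral from a fixed base vertex, and then using the cycle condition to show that this construction is path-independent.

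First, for the easy direction, assume the DB condition holds, so $\pi(v)P_{v\to w} = \pi(w)P_{w\to v}$ for all $v, w \in V$. Given any closed trajectory $\omega = v_0 v_1 \cdots v_{n-1} v_0$, I would write
\begin{equation*}
 P^\omega = \prod_{i=0}^{n-1} P_{v_i \to v_{i+1}} = \prod_{i=0}^{n-1} \frac{\pi(v_{i+1})}{\pi(v_i)} P_{v_{i+1} \to v_i},
\end{equation*}
where indices are taken mod $n$. The telescoping ratio $\prod_i \pi(v_{i+1})/\pi(v_i)$ collapses to $1$ because the trajectory is closed, and the remaining product is precisely $P^{r\omega}$. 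Loops contribute the identity on both sides, so they cause no trouble.

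For the converse, assume $P^\omega = P^{r\omega}$ for every closed trajectory. I would fix a base vertex $v_0 \in V$ and, for each $v \in V$, pick a path $\alpha_v = u_0 u_1 \cdots u_k$ in $\mathcal G$ with $u_0 = v_0$ and $u_k = v$; such a path exists since $\mathcal G$ is connected, and the reverse edges are available since $P_{u_i \to u_{i+1}} > 0 \iff P_{u_{i+1} \to u_i} > 0$. I then define
\begin{equation*}
 \kappa(v) = \frac{P^{\alpha_v}}{P^{r\alpha_v}} = \prod_{i=0}^{k-1} \frac{P_{u_i \to u_{i+1}}}{P_{u_{i+1} \to u_i}}.
\end{equation*}
The key step, which I expect to be the main obstacle, is showing $\kappa(v)$ is independent of the choice of path. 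Given two paths $\alpha,\beta$ from $v_0$ to $v$, the concatenation $\alpha \cdot r\beta$ is a closed trajectory based at $v_0$, and the hypothesis gives $P^{\alpha \cdot r\beta} = P^{r(\alpha \cdot r\beta)} = P^{\beta \cdot r\alpha}$, i.e.\ $P^\alpha P^{r\beta} = P^\beta P^{r\alpha}$, so $P^\alpha / P^{r\alpha} = P^\beta / P^{r\beta}$, proving well-definedness.

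Finally, I would verify DB for an arbitrary arc $v\to w \in \arcs$: taking $\alpha_w = \alpha_v \cdot (v\to w)$ gives
\begin{equation*}
 \kappa(w) = \kappa(v)\,\frac{P_{v\to w}}{P_{w\to v}}, \quad\text{hence}\quad \kappa(v)\,P_{v\to w} = \kappa(w)\,P_{w\to v},
\end{equation*}
which is exactly the DB condition with weight function $\kappa$. Normalizing $\kappa$ to a probability distribution $\pi$ then gives the reversible invariant measure, and the irreducibility that follows from connectedness of $\mathcal G$ ensures $\pi > 0$, completing the proof.
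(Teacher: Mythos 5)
Your proof is correct, but note that the paper itself states this proposition without proof (deferring to the cited arXiv paper), and the route it sets up in this section is genuinely different from yours. You give the classical probabilistic argument: the forward direction by telescoping the ratios $\pi(v_{i+1})/\pi(v_i)$ around a closed trajectory, and the converse by defining $\kappa(v)=P^{\alpha_v}/P^{r\alpha_v}$ along a path from a base vertex and using the cycle condition to prove path-independence. All the steps check out: the concatenation $\alpha\cdot r\beta$ is a legitimate closed trajectory because the section's standing hypothesis is that the support is the symmetric arc set of the undirected graph $\mathcal G$, so reversed arcs always carry positive probability; the resulting $\kappa$ is strictly positive by construction; and DB for non-edges and loops holds trivially. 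The paper, by contrast, embeds the statement in its algebraic machinery: it reduces closed trajectories to cycles via the monomial-ideal decomposition of Proposition on cycles, identifies the binomials $P^\omega-P^{r\omega}$ over cycles as a Graver basis of the toric ideal of the cocycle matrix, and recovers Kolmogorov's criterion as the statement that the K-ideal is the $\kappa$-elimination ideal of the detailed balance ideal. Your argument is more elementary and self-contained; the paper's approach buys a finite generating set (cycles suffice, not all closed trajectories), an explicit monomial parameterization of reversible transitions, and a Gr\"obner/Graver basis useful for computation. One small refinement worth adding to align with the paper: your well-definedness argument implicitly re-proves, for arbitrary closed trajectories, what the paper isolates as the reduction to cycles; stating that reduction separately would let you assume $P^\omega=P^{r\omega}$ only for cycles, which is the sharper form the paper ultimately uses.
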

 This suggests the following definition.
 \begin{definition}The \emph{Kolmogorov's ideal} or \emph{K-ideal} of the graph $\mathcal G$ is the ideal generated by the binomials $P^\omega-P^{r\omega}$, where $\omega$ is a closed trajectory.
 \end{definition}

 The generation of the ideal of closed trajectory by the cycles of Prop. \ref{prop:monimialcycles} together with the sygyzy characterization of Gr\"obnber basis on an ideal lead to the following proposition.
 \begin{proposition}
   \begin{enumerate}
   \item The K-ideal is generated by the set of binomials $P^\omega - P^{r\omega}$, where $\omega$ is cycle.
 \item The binomials $P^\omega-P^{r\omega}$, where $\omega$ is any cycle, form a \emph{reduced universal Gr\"obner basis} of the K-ideal.  
 \end{enumerate}
 \end{proposition}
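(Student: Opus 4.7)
I reduce both items to the cycle decomposition of Prop.~\ref{prop:monimialcycles}(\ref{item:monimialcycles2}) combined with a telescoping identity and Buchberger's criterion. For item (1), let $\omega$ be an arbitrary closed trajectory. By Prop.~\ref{prop:monimialcycles}(\ref{item:monimialcycles2}) there exist cycles $c_1,\dots,c_k$ (with repetitions allowed) such that $N(\omega)=\sum_{i=1}^k N(c_i)$, whence $P^\omega=\prod_{i=1}^{k}P^{c_i}$. Reversal is an involution on trajectories carrying cycles to cycles, so the same decomposition applied to $r\omega$ gives $P^{r\omega}=\prod_{i=1}^{k}P^{rc_i}$. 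The telescoping identity
\[
P^\omega-P^{r\omega}=\sum_{i=1}^{k}\Bigl(\prod_{j<i}P^{rc_j}\Bigr)\bigl(P^{c_i}-P^{rc_i}\bigr)\Bigl(\prod_{j>i}P^{c_j}\Bigr)
\]
then exhibits every defining generator of the K-ideal as an element of the ideal generated by the cycle binomials, which proves (1).

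For item (2), I verify Buchberger's criterion with respect to an arbitrary monomial order $\prec$. Because each elementary cycle $c$ is boolean and its arc-set is disjoint from that of $rc$ (the two orientations use different arcs), the leading term of $P^c - P^{rc}$ under any $\prec$ is a square-free monomial supported exactly on one orientation of the cycle. For two distinct cycle binomials $f_c$ and $f_{c'}$: if their leading monomials are coprime, Buchberger's criterion is automatic; otherwise $c$ and $c'$ share at least one arc, and the combined vector $N(c)+N(c')$ is again the transition count of a closed trajectory by Prop.~\ref{prop:grande}, which by Prop.~\ref{prop:monimialcycles} admits a cycle decomposition. Substituting that decomposition into the telescoping identity above produces a standard representation of the S-polynomial as a sum of multiples of cycle binomials with leading terms strictly below $\operatorname{lcm}(\operatorname{LM}(f_c),\operatorname{LM}(f_{c'}))$. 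Since $\prec$ was arbitrary, this establishes universality. Reducedness follows at once from minimality of elementary cycles: no cycle's arc-set strictly contains another's, so no leading monomial $P^c$ divides any other monomial appearing in a distinct cycle binomial, and coefficients are already $\pm1$.

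The main obstacle is ensuring that the S-polynomial reduction in the second case genuinely produces \emph{smaller} leading terms rather than merely exhibiting membership in the ideal. I would handle this by choosing the cycle decomposition of $N(c)+N(c')$ greedily with respect to $\prec$: at each extraction, remove an elementary cycle whose monomial is maximal among the remaining closed sub-trajectories. Minimality of elementary cycles, together with the boolean character of their transition counts, then forces each successive telescoping summand to carry a strictly smaller leading term, closing the Buchberger verification uniformly in $\prec$.
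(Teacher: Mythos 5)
Your proof of item (1) is correct and is exactly the intended route: decompose the transition count of a closed trajectory into cycle counts via Proposition~\ref{prop:monimialcycles}\eqref{item:monimialcycles2}, observe that reversal transposes the decomposition, and telescope. (One small caveat worth recording: for loops and $2$-cycles $\omega=r\omega$, so those binomials vanish and the generating set really consists of cycles of length at least $3$.)

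Item (2) has a genuine gap. The S-polynomial of $f_c=P^c-P^{rc}$ and $f_{c'}=P^{c'}-P^{rc'}$ with non-coprime leading terms is $\frac{L}{P^{c'}}P^{rc'}-\frac{L}{P^c}P^{rc}$ with $L=\operatorname{lcm}(P^c,P^{c'})$; the difference of its two exponent vectors is the \emph{signed} integer vector $z(c)-z(c')$, not the nonnegative sum $N(c)+N(c')$ to which you apply Proposition~\ref{prop:grande} and the telescoping identity. That identity only applies to binomials of the form $P^\omega-P^{r\omega}$ for a closed $\omega$, which the S-polynomial is not, so the reduction you describe never actually addresses the polynomial you need to reduce. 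The missing ingredient is precisely a \emph{conformal} decomposition of integer cycle vectors: one needs $z(c)-z(c')=\sum_i\alpha_i z(\omega_i)$ with each $z(\omega_i)$ sign-compatible with and dominated by $z(c)-z(c')$, so that no new arcs are introduced and the division steps strictly decrease leading terms for \emph{every} order. This is exactly the content of the subsequent proposition in the paper, which shows that the cycle vectors form a Graver basis of $\integers(\arcs)$; universality (for any term order) then follows from the standard fact that a Graver basis of a toric ideal is a universal Gr\"obner basis, which is the ``syzygy characterization'' the paper alludes to. Your ``greedy extraction'' patch is an acknowledgement of this obstacle rather than a resolution of it: without conformality there is no guarantee that the extracted cycles stay inside the support of the S-polynomial's monomials. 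Your reducedness argument, by contrast, is essentially right, since an elementary cycle's arc set cannot strictly contain that of a distinct elementary cycle (each vertex of a cycle has a unique outgoing arc in it), and this applies to the reversed orientations as well.
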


 In the previous sections we have constructed a process leading from a monomial parameterization of a statistical model to a binomial basis of its toric ideal. Here the process is reversed, in that we are given a binomial ideal and would like to show it is, in fact, a toric ideal. This program will eventually produce a parameterization of reversible Markov transitions in monomial form. We observe that the variety of the K-ideal is not satisfied by probabilities, but by transition probabilities. We had a similar issue when describing the difference between TMCs and MCs.

 The proof that the K-ideal is a toric ideal is based on standard construct of graph theory that we are going to review now. Let $\mathcal C$ be the set of cycles of $\arcs$.  For each cycle $\omega \in \mathcal C$ we define the \emph{cycle vector} of $\omega$ to be $z(\omega) = (z_a(\omega): a\in\arcs)$, where
 \begin{equation*}
   z_a(\omega) = \begin{cases}
 +1 & \text{if $a$ is an arc of $\omega$}, \\ -1 & \text{if $r(a)$ is an arc of $\omega$},\\
  0 & \text{otherwise.}
   \end{cases}
 \end{equation*}
 The \emph{cycle space} is the vector space generated in $\reals^{\arcs}$ by the cycle vectors.

 For each proper subset $B$ of the set of vertices, $\emptyset \ne B \subsetneq V$, say $B\in\cuts$, we define the \emph{cocycle vector} of $B$ to be $u(B) = (u_a(B): a \in \arcs)$, with
 \begin{equation*}
   u_a(B) = \begin{cases}
 +1 & \text{if $a$ exits from $B$,} \\
 -1 & \text{if $a$ enters into $B$,} \\
 0 & \text{otherwise.}
   \end{cases}
 \end{equation*}

 The cocycle vectors generate the \emph{cocycle space}. The cycle space and the cocycle space orthogonally split $\reals^\arcs$. We denote by $\integers(\arcs)$ the integer-valued cycle vectors. We will see below how this integer vectors are related to transition's counts. 

 The model matrix of the toric model we are going to produce for the K-ideal is the matrix with row's indices in $\edges\cup\cuts$ and column's indexes in $\arcs$. The element in position $(e,a)$ of the $(\edges\times\arcs)$-block is one if, and only if, the arc $a$ belongs to the edge $e$. The element in position $(B,a)$ of the $(\cuts\times\arcs)$-block is $u_B(a)$. We call this model matrix the \emph{cocycle matrix}. It follows that the cocycle space is the kernel of the cocycle matrix and $\integers(\arcs)$ is its lattice.

 The following definition provides a generalization of the Hilbert basis we already used in our discussion of the border of an $A$-model. It is needed below to provide a more precise version of the cycle decomposition of a closed trajectory. 

 \begin{definition}
   \begin{enumerate}
   \item Given two integer vectors $z_1, z_2 \in \integers^\arcs$, we say $z_1$ is \emph{conformal} to $z_2$, $z_1 \sqsubseteq z_2$, if the component-wise product is nonnegative and $|z_1| \le |z_2|$ component-wise, i.e. $z_{1,a}z_{2,a} \ge 0$ and $|z_{1,a}| \le |z_{2,a}|$ for all $a \in \arcs$. 
 \item A \emph{Graver basis} of $\integers(\arcs)$ is the set of the minimal elements with respect to the conformity partial order $\sqsubseteq$.
   \end{enumerate}
 \end{definition}

 \begin{proposition}
 \begin{enumerate}
 \item
 For each integer cycle vector  $z \in \integers(\arcs)$, $z = \sum_{\omega \in \mathcal C} \lambda(\omega) z(\omega)$, there exist cycles $\omega_1, \dots, \omega_n \in \mathcal C$ and positive integers $\alpha(\omega_1),\dots,\alpha(\omega_n)$, such that $z^+\ge z^+(\omega_i)$, $z^-\ge z^-(\omega_i)$, $i=1,\dots,n$ and $z = \sum_{i=1}^n \alpha(\omega_i) z(\omega_i).$
 \item The set $\left\{z(\omega) \colon \omega \in \mathcal C \right\}$  is a \emph{Graver basis} of  $\integers(\arcs)$.
 \end{enumerate}
 \end{proposition}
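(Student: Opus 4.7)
For part (1) the plan is to reinterpret the integer cycle vector $z\in\integers(\arcs)$ as a nonnegative integer directed flow and invoke an Eulerian decomposition. Define $f_a=z_a^+=\max(z_a,0)$ for each $a\in\arcs$. The edge-block of the cocycle matrix forces $z_{ra}=-z_a$, so $f_{ra}=z_a^-$ and $f_a-f_{ra}=z_a$. Orthogonality of $z$ to every cocycle vector $u(B)$, specialised to the singleton cuts $B=\set{v}$, gives $\sum_{a\in\out(v)} z_a=\sum_{a\in\inv(v)} z_a$, and the antisymmetry then yields flow conservation for $f$ at every vertex: $\sum_{a\in\out(v)} f_a = \sum_{a\in\inv(v)} f_a$. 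Thus $f$ is a balanced nonnegative integer flow on $(V,\arcs)$.

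Next I would extract simple directed cycles from $f$ one at a time. Starting from any vertex $v_0$ with $\sum_{a\in\out(v_0)} f_a>0$, trace a trajectory $v_0 v_1 v_2 \cdots$ by always selecting an outgoing arc of positive $f$-value; balance guarantees that as long as we have entered a vertex we can also leave it, so the trace cannot end until it revisits a vertex, producing a first closed sub-trajectory $c = v_h \cdots v_k(=v_h)$ that is elementary. Subtract the indicator of the arcs of $c$ from $f$ (staying nonnegative by construction) and iterate; the procedure terminates because $\sum_a f_a$ strictly decreases. Each extracted $c$ is an elementary closed trajectory, hence determines a cycle $\omega\in\cycles$ with $z(\omega)_a=+1$ on the arcs of $c$ and $-1$ on their reverses. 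Because every arc traced had $f_a>0$, i.e.\ $z_a>0$, conformality $z(\omega)\sqsubseteq z$ holds entry by entry. Grouping identical cycles and letting $\alpha(\omega_i)$ be their multiplicities gives $z=\sum_i \alpha(\omega_i) z(\omega_i)$ with $z^+\ge z^+(\omega_i)$ and $z^-\ge z^-(\omega_i)$ as required.

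For part (2) I would verify the two inclusions characterising the Graver basis. First, every cycle vector $z(\omega)$ is conformally minimal: any conformal decomposition $z(\omega)=z_1+z_2$ with nonzero $z_1,z_2\in\integers(\arcs)$ conformal to $z(\omega)$ must have both $z_1,z_2$ $\set{-1,0,+1}$-valued with supports inside the arc set of $\omega$; applying part (1) to $z_1$ yields a cycle $\omega'$ whose arcs lie inside those of $\omega$, but an elementary closed trajectory admits no proper elementary closed sub-trajectory, forcing $\omega'=\omega$ and $z_2=0$. Conversely, any conformally minimal $z\in\integers(\arcs)$ decomposes via part (1) as $z=\sum_i \alpha(\omega_i) z(\omega_i)$ with each $z(\omega_i)\sqsubseteq z$; by minimality the sum collapses to a single term with coefficient one, so $z=z(\omega_1)$.

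The main obstacle is the rigorous Eulerian extraction in part (1): one must verify that the greedy trace produces an \emph{elementary} closed sub-trajectory whose cycle vector is conformal to the original $z$, not merely to $f$. The conformality is what requires working with the signed vector $z$ and the antisymmetry $z_{ra}=-z_a$ throughout, rather than decomposing $f$ in isolation; loops $a\in\loops$ are handled by the same argument since each loop contributes a length-one elementary closed trajectory whose cycle vector is already $\pm 1$ on that single arc.
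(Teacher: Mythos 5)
Your argument is correct. The paper itself states this proposition without proof (it is deferred to the authors' cited paper on the algebra of reversible Markov chains), and your route --- using the edge-block relations $z_{ra}=-z_a$ and the singleton-cut relations to see that $z^+$ is a balanced nonnegative integer flow, then greedily peeling off elementary directed cycles, each automatically conformal to $z$ because $z^+$ and $z^-$ live on disjoint, mutually reversed arc sets --- is the standard conformal circuit decomposition that the omitted proof relies on, and your reduction of the Graver property to the fact that an elementary cycle contains no proper sub-cycle is sound. The only blemish is the closing aside about loops: in this section $\arcs$ consists of the two orientations of the edges of a loop-free graph, and a loop arc would be its own reverse, forcing $z_a=-z_a=0$ there, so loops simply do not enter the cycle space and need no separate treatment.
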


 From the previous proposition follows the result on the K-ideal.

 \begin{proposition}
 The K-ideal is the toric ideal of the cocycle matrix. 
 \end{proposition}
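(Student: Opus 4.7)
My plan is to reduce the statement to two ingredients that are essentially already in place: an identification of the kernel lattice of the cocycle matrix, and the Graver basis description of cycle vectors just stated.

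First I would unpack the definition of the toric ideal of the cocycle matrix $M$, whose rows are indexed by $\edges\cup\cuts$ and columns by $\arcs$. By the general description of a toric ideal recalled in the section on $A$-models, this ideal is generated by the binomials $P^{\alpha^+}-P^{\alpha^-}$ as $\alpha$ runs over the saturated lattice $\ker M \cap \integers^\arcs$. The text already records that the cocycle space is the kernel of the cocycle matrix and that $\integers(\arcs)$ is its lattice, so I would verify this directly: the $\edges$-block rows enforce $\alpha_a + \alpha_{r(a)} = 0$ (opposite arcs get opposite coefficients, the essential cycle-vector sign condition), while the $\cuts$-block rows, via the cocycle vectors $u(B)$, enforce the flow-conservation condition at every vertex (take $B=\{v\}$). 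Together these two conditions characterize exactly the integer cycle vectors, giving $\ker M \cap \integers^\arcs = \integers(\arcs)$.

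Next I would invoke the Graver-basis proposition immediately preceding the statement: the set $\{z(\omega):\omega\in\cycles\}$ is a Graver basis of $\integers(\arcs)$. Since a Graver basis generates the lattice (and in particular, since every $\alpha\in\integers(\arcs)$ admits a conformal sign-preserving decomposition into cycle vectors, the corresponding binomial $P^{\alpha^+}-P^{\alpha^-}$ lies in the ideal generated by the cycle-vector binomials), the toric ideal of $M$ is generated by the binomials $P^{z^+(\omega)}-P^{z^-(\omega)}$ for $\omega\in\cycles$. Now observe that for a cycle $\omega$ the positive part $z^+(\omega)$ records exactly the arcs of $\omega$ and the negative part $z^-(\omega)$ exactly the arcs of $r\omega$, each with multiplicity one; hence $P^{z^+(\omega)}-P^{z^-(\omega)} = P^\omega - P^{r\omega}$.

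Finally, I would match these generators to those of the K-ideal. By the proposition characterizing the K-ideal, it is generated by the binomials $P^\omega - P^{r\omega}$ as $\omega$ ranges over cycles. Thus both ideals have the same generating set, and the two ideals coincide. The main obstacle I anticipate is the first step, namely a clean verification that the rows labeled by $\cuts$ together with the involutive rows on edges cut out precisely the integer cycle space (as opposed to some larger lattice containing it), since this is where the specific choice of cocycle rows and of the pairing of opposite arcs does the real work; everything downstream is then a direct application of the Graver-basis proposition.
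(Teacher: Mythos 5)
Your proposal is correct and follows essentially the same route the paper intends: identify the integer kernel lattice of the cocycle matrix with the integer cycle vectors $\integers(\arcs)$, invoke the Graver-basis proposition to conclude that the binomials $P^{z^+(\omega)}-P^{z^-(\omega)}=P^\omega-P^{r\omega}$, $\omega\in\cycles$, generate the toric ideal, and match these with the cycle generators of the K-ideal. You in fact supply more detail than the paper (which only asserts ``From the previous proposition follows the result''), in particular the explicit verification that the edge rows and the singleton-cut rows cut out exactly the integer cycle space, and the correct observation that it is the conformal decomposition, not mere lattice generation, that makes the Graver basis yield ideal generators.
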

 In fact, the binomials $P^\omega-P^{r\omega}$, $\omega\in\cycles$, form a Graver basis of the K-ideal. The cocycle matrix has negative entries and it could easily modified to a matrix with nonnegative entries as we have done in the discussion of the Ising model.

 The previous algebraic statement is rephrased in statistical terms as follows. 

 \begin{enumerate}
 \item The \emph{strictly positive reversible transition probabilities} on $(V,\arcs)$ are given by
 \begin{align*}
   P_{v\to w} &= s(v,w) \prod_B t_B^{u_{v\to w}(B)}\\  &= s(v,w)
    \prod_{B \colon v\in B, w \notin B } t_B \ \prod_{B \colon w\in B, v \notin B} t_B^{-1} \ ,
 \end{align*}
 where $s(v,w) = s(w,v) > 0$, $t_B >0$.
 \item The first set of parameters, $s(v,w)$, is a function of the edge $\overline{vw}\in\edges$.
 \item The second set of parameters, $t_B$, $B\in\cuts$, represents the deviation from symmetry. It is not identifiable because the full set of cocycle vectors $u_B$, $B\in\cuts$, is not linearly independent.
 \item The parametrization can be used to derive an explicit form of the invariant probability.
 \end{enumerate}

 The following proposition is a summary of all results.

 \begin{proposition}
 Consider the strictly non-zero points on the K-variety.
 \begin{enumerate}
 \item
 The symmetric parameters $s(e)$, $e\in\edges$, are uniquely determined. The parameters $t_{B}$, $B\in\cuts$ are confounded by the $(\cuts\times\arcs)$-block of the cocycle matrix.
 \item
 An identifiable parametrization is obtained by taking a subset of parameters corresponding to linearly independent rows, denoted by $t_{B}$, $B\in\mathcal T$, $\mathcal T \subset \mathcal S$:
 \begin{equation*}
   P_{v\to w} =  s(v,w) \prod_{B\in\cuts \colon v\in B, w \notin B } t_B \
   \prod_{B\in\cuts \colon w\in B, v \notin B} t_B^{-1} \ .
 \end{equation*}
 \item
 The detailed balance equations, $ \kappa(v) P_{v\to w}=\kappa(w) P_{w\to v} $, are verified if, and only if,
 \begin{equation*}
 \kappa(v) \propto \prod_{B  \colon v\in S} t_B^{-2} \ .
 \end{equation*}
 \end{enumerate}
 \end{proposition}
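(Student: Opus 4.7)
The proposition is essentially a summary of the preceding material, so the plan is to unpack the three claims and verify each in turn, leveraging the already-derived parametrization $P_{v\to w} = s(v,w)\prod_{B\in\cuts} t_B^{u_{v\to w}(B)}$.

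For Item 1 the key observation is that reversing an arc flips the sign of every cocycle coordinate, $u_{w\to v}(B) = -u_{v\to w}(B)$, so that
\begin{equation*}
P_{v\to w}\,P_{w\to v} = s(v,w)\,s(w,v)\prod_{B\in\cuts} t_B^{u_{v\to w}(B)+u_{w\to v}(B)} = s(v,w)^2,
\end{equation*}
giving $s(\overline{vw}) = \sqrt{P_{v\to w}P_{w\to v}}$, uniquely determined by $P$. With $s$ fixed, the residual quantity $P_{v\to w}/s(v,w) = \prod_B t_B^{u_{v\to w}(B)}$ is a monomial $A$-model whose model matrix is exactly the $(\cuts\times\arcs)$-block of the cocycle matrix, and so the confounding equation from the earlier identifiability proposition gives the stated confounding: two choices $t,t'$ produce the same transitions if and only if $(\log t_B-\log t'_B)_{B\in\cuts}$ lies in the left kernel of that block. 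Item 2 is then immediate: restricting to a maximal linearly independent subset $\mathcal T\subset\cuts$ of rows kills this kernel, making the parametrization injective, and the product restricted to $\mathcal T$ yields the displayed formula.

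For Item 3, I would verify sufficiency by a cut-by-cut case analysis. With $\kappa(v)=\prod_{B\colon v\in B} t_B^{-2}$, a check on the four possibilities (both $v,w$ inside $B$; both outside; or separated in either direction) shows that for every cut,
\begin{equation*}
2\bigl[(w\in B) - (v\in B)\bigr] = u_{w\to v}(B) - u_{v\to w}(B),
\end{equation*}
so that $\kappa(v)/\kappa(w) = \prod_B t_B^{u_{w\to v}(B)-u_{v\to w}(B)}$. Combined with $s(v,w)=s(w,v)$ this yields $\kappa(v)P_{v\to w} = \kappa(w)P_{w\to v}$. The necessity direction follows from the uniqueness (up to scalar) of the invariant measure of an irreducible Markov chain: once detailed balance holds, $\kappa$ is pinned down to a scalar multiple, and the sufficiency calculation exhibits the required multiple.

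The step I expect to be the main obstacle is the clean separation in Item 1 between the symmetric edge parameters $s(e)$ and the cut parameters $t_B$: once one sees that the square-root formula extracts $s$ unambiguously and that the remainder is a genuine $A$-model whose model matrix is the cocycle block, everything else reduces to linear algebra (Item 2) or the direct cut-wise verification (Item 3).
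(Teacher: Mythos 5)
Your verification is correct, and it is worth noting that the paper itself offers no proof of this proposition: it is explicitly presented as ``a summary of all results,'' with the details deferred to the authors' paper on the algebra of reversible Markov chains. Your argument supplies exactly the missing verification in the intended spirit: the identity $u_{w\to v}(B)=-u_{v\to w}(B)$ gives $P_{v\to w}P_{w\to v}=s(v,w)^2$, so $s$ is recovered from $P$ independently of $t$, and the residual monomial is confounded precisely by the left kernel of the $(\cuts\times\arcs)$-block; the cut-by-cut check $2[(w\in B)-(v\in B)]=u_{w\to v}(B)-u_{v\to w}(B)$ then yields detailed balance with $\kappa(v)\propto\prod_{B\colon v\in B}t_B^{-2}$. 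Two small points deserve care. First, the residual $\prod_B t_B^{u_a(B)}$ is not literally an $A$-model in the paper's sense, since the cocycle matrix has negative entries (the paper remarks that it ``could easily be modified to a matrix with nonnegative entries''); your confounding conclusion still holds because the log-linear argument behind the confounding proposition uses only the row span, not nonnegativity, but you should say so rather than cite the $A$-model identifiability result verbatim. Second, for the necessity in Item 3, the appeal to uniqueness of the invariant measure of an irreducible Markov chain is slightly off target: the proposition concerns strictly non-zero points of the K-variety, which need not be stochastic matrices. What you actually need --- and what your sufficiency computation already provides --- is that detailed balance forces $\kappa(v)/\kappa(w)=P_{w\to v}/P_{v\to w}$ on every edge, and connectedness of $\mathcal G$ then determines $\kappa$ up to a multiplicative constant; this elementary graph-connectivity argument replaces the Markov-chain theorem and closes the gap.
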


 It is possible to give an algebraic form of the original Kolmogorov statement on the equivalence of detailed balance with equality of transitions on closed trajectories in the form of a statement on elimination ideals.

 \begin{definition}
 The \emph{detailed balance ideal} is the ideal 
 \begin{equation*}
   \idealof{\prod_{v\in V} \kappa(v) - 1, \kappa(v)P_{v\to w}- \kappa(w)P_{v\to w}, \  (v\to w)\in \arcs}
 \end{equation*}
 in the ring $\rationals[\kappa(v), v\in V; P_{v\to w}, (v\to w)\in \arcs]$.
 \end{definition}

 \begin{proposition}
 \begin{enumerate}
 \item
 The matrix $\left[P_{v\to w}\right]_{v\to w \in \arcs}$ is a point of the variety of the K-ideal  if and only if there
 exists $\kappa=\left(\kappa(v) \colon v\in V \right)$ such that $\left(\kappa,P \right)$ belongs to the variety of the
 detailed balance ideal.
 \item
 The detailed balance ideal is a toric ideal.
 \item
 The K-ideal is the $\kappa$-elimination ideal of the detailed balance ideal.
 \end{enumerate}
 \end{proposition}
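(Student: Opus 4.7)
The plan is to establish the three items in order, since each relies on the earlier ones.

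For item (1), the direction ``detailed balance implies K-ideal'' is the easy one: along a closed trajectory $\omega = v_0 v_1 \cdots v_{n-1} v_0$, I multiply the relations $\kappa(v_{i-1})P_{v_{i-1}\to v_i} = \kappa(v_i) P_{v_i\to v_{i-1}}$ around the loop and observe that every $\kappa(v_i)$ appears once on each side, so the $\kappa$'s cancel and $P^\omega = P^{r\omega}$ drops out. For the converse, I fix a base vertex $v_0$, set $\kappa(v_0)=1$, and for each $v$ pick a path $v_0\to v_1\to\cdots\to v_k=v$ in $\mathcal G$ and define $\kappa(v) = \prod_{i=1}^{k} P_{v_i\to v_{i-1}}/P_{v_{i-1}\to v_i}$. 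Two different paths from $v_0$ to $v$ concatenated (with one reversed) form a closed trajectory, and the K-ideal vanishing on $P$ gives exactly the equation that makes $\kappa(v)$ path-independent. A final rescaling ensures $\prod_v \kappa(v)=1$.

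For item (2), I build an explicit monomial parameterization. Take parameters $s(e)$ for $e\in\edges$, $t_B$ for $B\in\cuts$, and a scalar $c$, and set
\begin{equation*}
P_{v\to w} = s(\overline{vw}) \prod_{B\in\cuts} t_B^{u_{v\to w}(B)}, \qquad \kappa(v) = c\prod_{B\in\cuts\colon v\in B} t_B^{-2}.
\end{equation*}
A direct check shows $\kappa(v)P_{v\to w} = \kappa(w) P_{w\to v}$, because $u_{v\to w}(B) - u_{w\to v}(B) = 2u_{v\to w}(B)$ while the ratio $\kappa(v)/\kappa(w)$ contributes $t_B^{-2}$ precisely when $B$ separates $v$ from $w$, with the correct sign. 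Choosing $c$ as the $|V|^{-1}$-th root of $\prod_v \prod_{B:v\in B} t_B^{2}$ (work over the Laurent ring, or split $t_B = t_B^+/t_B^-$) forces $\prod_v\kappa(v)=1$. The kernel of the resulting monomial homomorphism is a binomial prime ideal, hence toric; to identify it with the DB ideal, I verify that the stated generators lie in the kernel (immediate from the parameterization), and conversely, given any binomial relation among the $\kappa$'s and $P$'s, I rewrite each $\kappa(v)$ via the detailed balance equations so that only $P$'s remain, then apply the toric description of the K-ideal from the preceding proposition on the cocycle matrix to reduce the binomial to a combination of DB generators.

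For item (3), the $\kappa$-elimination ideal $\idealof{\mathrm{DB}}\cap\rationals[P_{v\to w}]$ has variety equal to $\pi_P(V(\mathrm{DB}))$, the projection onto the $P$-coordinates. By item (1), this projected variety coincides set-theoretically with $V(\mathrm K)$. Since both the elimination ideal of the toric DB ideal (eliminating a subset of the parameter variables from a prime ideal yields a prime ideal) and the K-ideal are prime, equal varieties force the ideals themselves to agree.

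The main obstacle is item (2): reconciling the negative exponents $t_B^{-1}$ in the $P$-parameterization and the non-monomial normalization $\prod_v \kappa(v)=1$ with the standard definition of a toric ideal as the kernel of a polynomial monomial map. My plan is to work in a Laurent polynomial ring in $t_B$ (or equivalently to introduce a split $t_B = t_B^+/t_B^-$ together with an auxiliary relation $t_B^+ t_B^- = 1$), absorbing the normalization into the scalar $c$; once the parameterization is genuinely monomial, the primality and binomial structure fall out automatically, and item (3) follows cleanly.
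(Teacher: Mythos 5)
The paper states this proposition without proof (it defers to the companion paper on the algebra of reversible Markov chains), so your argument can only be judged on its own terms and against the machinery the preceding propositions set up. Items (1) and (3) are right in outline, with two caveats. In item (1) your converse defines $\kappa(v)$ as a product of ratios $P_{v_i\to v_{i-1}}/P_{v_{i-1}\to v_i}$ along a path, which requires all the $P_a$ involved to be nonzero; the equivalence genuinely fails on the boundary of the K-variety (on a triangle, set $P_{v\to w}=0$, $P_{w\to v}=1$ and kill $P^{r\omega}$ with a second zero elsewhere: the point lies on the K-variety, yet detailed balance would force $\kappa(w)=0$, contradicting $\prod_v\kappa(v)=1$). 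So your proof, like the statement, must be read on the torus of nonvanishing coordinates, consistently with the ``strictly non-zero points'' of the preceding summary proposition. In item (3), ``equal varieties force equal ideals'' needs the Nullstellensatz over an algebraically closed field plus primality of both sides, and item (1) only gives equality of torus points, so you additionally need density of the torus in both irreducible varieties. This is repairable, but the cleaner route --- the one the paper's earlier propositions are visibly set up for --- is that the elimination ideal of a toric ideal is the toric ideal of the submatrix on the retained columns, and the $P$-columns of the parameterization reproduce exactly the cocycle matrix, whose toric ideal was already identified with the K-ideal.

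The genuine gap is in item (2). Your parameterization $\kappa(v)=c\prod_{B\ni v}t_B^{-2}$ forces $c^{|V|}=\prod_{B\in\cuts}t_B^{2|B|}$, i.e.\ $c=\prod_{B}t_B^{2|B|/|V|}$, and the exponents $2|B|/|V|$ are not integers in general (take $|V|=5$ and a singleton cut). Neither passing to the Laurent ring nor splitting $t_B=t_B^+/t_B^-$ cures a fractional exponent: the map you exhibit is not a monomial map, so toricness of its kernel does not follow. The same problem resurfaces in your reverse inclusion, since solving the detailed balance relations together with $\prod_v\kappa(v)=1$ expresses $\kappa(v)^{|V|}$, not $\kappa(v)$, as a Laurent monomial in the $P$'s. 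The fix is to absorb the normalization into the $\kappa$'s rather than into a scalar: for instance map $P_{v\to w}\mapsto s_{vw}k_v^{-1}$ with $s_{vw}=s_{wv}$, $\kappa(v)\mapsto k_v$ for $v\ne v_0$, and $\kappa(v_0)\mapsto\prod_{v\ne v_0}k_v^{-1}$, an honest integer-exponent Laurent monomial map annihilating all generators; or show directly that the detailed balance ideal is the lattice ideal of the lattice spanned by the exponent vectors of its generators and that this lattice is saturated. Even then, the half asserting that the DB binomials generate the whole kernel needs more than the sketch you give.
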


 By combining the monomial representation of the transitions and the monomial representation of the invariant probability, we obtain a classical parameterization of reversible transitions in the form
 \begin{equation*}
  P_{v\to w} =  s(v,w) \kappa(w)^{1/2} \kappa(v)^{-1/2} \ ,
 \end{equation*}
 together with the constrain
 \begin{equation*}
   \kappa(v)^{1/2}\ge \sum_{w \ne v} s(u,w) \kappa(w)^{-1/2} \ .
 \end{equation*}
 This form is well known in the literature on the \emph{Hastings-Metropolis} simulation algorithm, where we are given an unnormalized positive probability $\kappa$ and a transition $Q_{v\to w} > 0$ if $(v\to w) \in \arcs$. We are required to produce a new transition $P_{v\to w} = Q_{v\to w} \alpha(v,w)$ such that $P$ is reversible with invariant probability $\kappa$ and $0 < \alpha(v,w) \le 1$. We have
 \begin{equation*}
 Q_{v\to w} \alpha(v,w) = s(v,w) \kappa(w)^{1/2} \kappa(v)^{-1/2}\end{equation*}
 and moreover we want
 \begin{equation*}
   \alpha(v,w) = \frac{s(v,w)\kappa(w)^{1/2}}{Q_{v\to w}\kappa(v)^{1/2}} \le 1.
 \end{equation*}

 \begin{proposition}
 Let $Q$ be a probability on $V\times V$, strictly positive on $\edges$, and let $\pi(x) = \sum_y Q(x,y)$. If $f: ]0,1[\times]0,1[ \to ]0,1[$ is a symmetric function such that $f(u,v) \le u \wedge v$ then
     \begin{align*}
       P(x,y) =
       \begin{cases}
         f(Q(x,y),Q(y,x)), & \set{x,y}\in\edges, \\
         \pi(x) - \sum_{y\colon y \ne x} P(x,y), & x=y, \\ 0 & \text{otherwise} \ ,
       \end{cases}
     \end{align*}
 is a 2-reversible probability on $\edges$ such that $\pi(x) = \sum_y P(x,y)$, positive if $Q$ is positive.
 \end{proposition}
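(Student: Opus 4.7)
The plan is to verify four assertions in turn: pointwise symmetry of $P$ (which is the 2-reversibility), nonnegativity of all entries, that the row sums equal $\pi(x)$ and hence that $P$ is a probability on $V\times V$, and finally that $P$ is strictly positive where $Q$ is. Symmetry is immediate: on $\set{x,y}\in\edges$ with $x\ne y$ it follows from the symmetry of $f$, since $P(x,y)=f(Q(x,y),Q(y,x))=f(Q(y,x),Q(x,y))=P(y,x)$; on the remaining off-diagonal pairs both sides vanish by the third case of the definition; and the diagonal is trivial.

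The substantive step, and the only place the hypothesis $f(u,v)\le u\wedge v$ is used, is nonnegativity of the diagonal. For each $y\ne x$ one has $P(x,y)\le Q(x,y)\wedge Q(y,x)\le Q(x,y)$, so
\[
\sum_{y\ne x} P(x,y) \;\le\; \sum_{y\ne x} Q(x,y) \;\le\; Q(x,x)+\sum_{y\ne x} Q(x,y) \;=\; \pi(x),
\]
which makes $P(x,x)=\pi(x)-\sum_{y\ne x}P(x,y)\ge 0$. Off the diagonal, $P$ takes values in $]0,1[$ on $\edges$ and $0$ elsewhere, so all entries are nonnegative.

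Row sums and total mass then come for free: $\sum_y P(x,y)=\pi(x)$ by the very definition of $P(x,x)$, so $\pi$ is indeed the marginal of $P$, and summing again gives $\sum_{x,y} P(x,y)=\sum_x \pi(x)=\sum_{x,y}Q(x,y)=1$. Combined with symmetry, this already shows $P$ is a 2-reversible probability with marginal $\pi$. For the final clause, if $Q$ is strictly positive everywhere, then the same chain of inequalities gives $P(x,x)\ge Q(x,x)>0$, so $P$ inherits strict positivity from $Q$ both off and on the diagonal.

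The only step that requires more than bookkeeping is the diagonal nonnegativity, and the hypothesis $f(u,v)\le u\wedge v$ is engineered precisely so that the mass assigned off-diagonal never exceeds the marginal $\pi(x)$ that must be preserved on row $x$. Once this bound is in hand, every other item in the statement is a direct consequence of the definition and of the symmetry of $f$.
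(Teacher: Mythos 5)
Your proof is correct and complete. The paper itself states this proposition without proof (it is followed only by the remark listing example choices of $f$), so there is no in-paper argument to compare against; your verification supplies exactly what is needed, and you correctly isolate the one non-trivial point, namely that $f(u,v)\le u\wedge v$ forces $\sum_{y\ne x}P(x,y)\le\sum_{y\ne x}Q(x,y)\le\pi(x)$ and hence $P(x,x)\ge Q(x,x)\ge 0$, which simultaneously yields the nonnegativity of the diagonal and the strict positivity claim when $Q$ is positive. The identification of 2-reversibility with pointwise symmetry of the joint matrix $P$ matches the paper's earlier definition of the reversible two-step distribution, so nothing is missing.
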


 The proposition applies to various cases of interes, e.g. $f(u,v) = u \wedge v$, $f(u,v) = uv/(u+v)$, $f(u,v) = uv$. 

 \subsection*{Notes}
 A recent exposition of the theory of reversible MCs appears in the lecture notes by Aldous and Fill\cite{aldous|fill:2002ch3}. We use standard results in graph theory, see e.g. the monograph by Bollobas\cite{bollobas:1998}. Here we mainly follow our paper\cite{pistone|rogantin:1007.4282v2}. The application to simulation is discussed e.g. in the monograph by Liu\cite{liu:2008strategies}.

 \section*{Discussion}
 The basic notions in Probability and Mathematical Statistics are those of independence and conditioning, which in turn are expressed by product of probability and conditional probability. It is not by chance that almost all classical statistical models have a product form and that the logarithmic transformations are a key ingredient of computation in Statistics. Modern Combinatorial and Computational Commutative Algebra have provided a unifying framework for the diverse instances of such basic structures. In particular, the notion of toric ideal captures the essentials of what we have called Lattice Exponential Families, i.e. the discrete case that is rife in Applied Statistics and Statistical Physics. We have presented an overview of the algebraic theory of statistical models that fall under the scheme, together with an algebraic discussion of their limit cases and their differentiation. The application to Markov chains is an expansion of these ideas to objects that do not belong to a probability simplex. This is actually a special case of a more general interesting theory, namely Bayes networks. It is likely to expect applications to bayesian statistics, a topic we have not touched upon.

 We thank the organizers of the Osaka conference, especially professor Takayuki Hibi for providing an ideal place to discuss these, and related, ideas. We thank Paolo Baldi, Francesco Grande, Luigi Malag\`o, Fabio Rapallo for useful discussions while this paper was in preparation.

 \bibliographystyle{ws-procs9x6}
 %\bibliography{tutto}

\end{document}